\newcommand{\QQ}{\mathcal{Q}}
\DeclareMathOperator{\dist}{dist}
\title{Rationality of Rigid Quiver Grassmannians}
\author{Hans Franzen}
\address{Mathematisches Institut der Universit\"at Bonn\\Endenicher Allee 60\\53115 Bonn (Germany)}
\email{franzen@math.uni-bonn.de}
\date{}
\begin{document}
	
	\begin{abstract}
		We show that any quiver Grassmannian associated with a rigid representation of a quiver is a rational variety using torus localization techniques.
	\end{abstract}
	
	\maketitle

	\section{Introduction}
	
	A quiver Grassmannian $\Gr_e(M)$ is a projective scheme associated with a fixed representation $M$; its points parametrize subrepresentations of $M$ of a given dimension vector $e$. They have been introduced by Schofield in \cite{Schofield:92}. It was already remarked by Schofield that quiver Grassmannians are, in general, neither irreducible, nor reduced; in particular they are not smooth in general. In fact, every projective variety can be obtained as a quiver Grassmannian as was shown independently by Hille \cite{Hille:15}, Huisgen-Zimmermann \cite{Huisgen-Zimmermann:07}, and Reineke \cite{Reineke:13}. This shows that we cannot expect any special properties when studying arbitrary quiver Grassmannians. 
	
	However, quiver Grassmannians associated with rigid representations have gained importance in the theory of cluster algebras due to work of Caldero and Chapoton who show that Euler characteristics of these quiver Grassmannians appear in cluster variables \cite{CC:06}. Hence the positivity conjecture of Fomin--Zelevinsky \cite{FZ:07} translates into positivity of the Euler characteristic of rigid quiver Grassmannians. This conjecture was settled independently by Nakajima \cite{Nakajima:11} and Qin \cite{Qin:12}. Both authors show, using different methods, that the (Betti) cohomology of $\Gr_e(M)$ is concentrated in even degrees provided that $M$ is rigid. This result was generalized in \cite{CEFR:18} where it is shown that the class of the diagonal inside the Chow ring of $\Gr_e(M) \times \Gr_e(M)$ is contained in the image of the exterior product map. Formal consequences of this are that the cohomology of $\Gr_e(M)$ is torsion-free and vanishes in odd degrees and that the cycle map is an isomorphism. 
	
	This might be an indicator for the existence of a cellular decomposition. Indeed there are several classes of examples of rigid quiver Grassmannian which admit a cellular decomposition, such as quiver Grassmannians attached to indecomposable representations of a quiver of finite or affine type \cite{CEFR:18} (special cases of this were shown before in \cite{Cerulli_Irelli:11, CE:11, CFFFR:17, LW:15:1, LW:15:2}). In \cite{RW:18} Rupel and Weist show that quiver Grassmannians of exceptional representations of a generalized Kronecker quiver admit a cellular decomposition. Although this is good evidence that quiver Grassmannians of rigid representations might always possess a cellular decomposition, it is not known, as of now, if they actually do.
	
	A necessary condition for the existence of a cellular decomposition is rationality.
	The main objective of the present paper is to show
	\begin{thm} \label{t:main}
		Let $M$ be a rigid representation of a quiver $Q$ and let $e \leq \dimvect M$ be a dimension vector. Then $\Gr_e(M)$ is rational.
	\end{thm}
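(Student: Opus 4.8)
The plan is to realise $\Gr_e(M)$ as a fibre of a smooth rational family and then to pin down its birational type by torus localization. First I would introduce the \emph{universal quiver Grassmannian}: with $d=\dimvect M$ and $R_d(Q)$ the representation space, let $\mathcal{G}\subseteq R_d(Q)\times\prod_{i\in Q_0}\Gr(e_i,d_i)$ be the closed subvariety of pairs $(N,(U_i)_i)$ satisfying $N_\alpha(U_i)\subseteq U_j$ for every arrow $\alpha\colon i\to j$. Projection to $\mathbf{Gr}:=\prod_i\Gr(e_i,d_i)$ presents $\mathcal{G}$ as the total space of a vector bundle (the fibre over $(U_i)_i$ being the linear space of compatible representations), so $\mathcal{G}$ is smooth, irreducible, and rational, since a product of Grassmannians is rational and vector bundles preserve rationality. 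The other projection $\pi\colon\mathcal{G}\to R_d(Q)$ is equivariant for $G_d:=\prod_i\mathrm{GL}_{d_i}$ and has $\pi^{-1}(N)=\Gr_e(N)$.

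Rigidity then enters as follows. Since $\mathrm{Ext}^1(M,M)=0$ and $R_d(Q)$ is an affine space, the orbit $G_d\cdot M$ is open and dense, so (assuming $\Gr_e(M)\neq\emptyset$, else there is nothing to prove) $\pi^{-1}(G_d\cdot M)$ is open and dense in $\mathcal{G}$ and equals the associated bundle $G_d\times_{\mathrm{Aut}(M)}\Gr_e(M)$ over $G_d\cdot M=G_d/\mathrm{Aut}(M)$. As $kQ$ is hereditary, every indecomposable summand of the rigid module $M$ is a brick (Happel--Ringel), hence $\mathrm{End}(M)$ has semisimple quotient a product of matrix algebras and $\mathrm{Aut}(M)$ is an extension of some $\prod_i\mathrm{GL}_{a_i}$ by a unipotent group; in particular $\mathrm{Aut}(M)$ is a \emph{special} algebraic group. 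Therefore the $\mathrm{Aut}(M)$-torsor $G_d\to G_d/\mathrm{Aut}(M)$, and with it the above bundle, is Zariski-locally trivial, so over a dense open $V\subseteq G_d\cdot M$ we obtain $\pi^{-1}(V)\cong V\times\Gr_e(M)$. Since $V$ is open in an affine space and $\pi^{-1}(V)$ is open in the rational variety $\mathcal{G}$, both are rational; it follows that $\Gr_e(M)$ is reduced, irreducible, and \emph{stably} rational.

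The heart of the matter — and the step I expect to be hardest — is to improve stable rationality to rationality, and this is where torus localization is indispensable. The idea is to equip $\Gr_e(M)$ with a $\mathbb{G}_m$-action whose fixed locus is already understood and to run a Bia\l{}ynicki--Birula-type argument. Such actions arise in two ways. From cocharacters of $\mathrm{Aut}(M)$: the one attached to the isotypic decomposition $M=\bigoplus_i M_i^{a_i}$ has fixed locus on $\Gr_e(M)$ a disjoint union of products $\prod_i\Gr_{e^{(i)}}(M_i^{a_i})$, and iterating (now with a maximal torus of $\mathrm{Aut}(M_i^{a_i})=\mathrm{GL}_{a_i}$, whose fixed loci are products of quiver Grassmannians of the single module $M_i$) reduces to the case $M$ indecomposable and exceptional. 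In that case one instead uses that an exceptional module is a tree module (Ringel), which furnishes a $\mathbb{Z}$-grading of $M$ compatible with the path-length grading of $kQ$; the associated cocharacter, once twisted by the scaling action on the arrow spaces, fixes the point $M\in R_d(Q)$ and hence acts on $\Gr_e(M)$, with fixed locus the graded quiver Grassmannian of $M$ over the \emph{acyclic} covering quiver. In each case the stratum attracted to the ``source'' fixed component is dense in $\Gr_e(M)$, and one wants it to be an affine-space bundle over that component, so that $\Gr_e(M)$ is birational to an affine space over a component that is rational by induction. The obstacle is precisely here: since $\Gr_e(M)$ is singular in general, the local structure of the attracting stratum — in particular whether the retraction to the source is an affine bundle — is not automatic, and I expect one must control it by exploiting that the torus acts as well on the smooth ambient family $\mathcal{G}$, so that the well-behaved Bia\l{}ynicki--Birula decomposition of $\mathcal{G}$ can be leveraged along the fibre $\Gr_e(M)$; arranging the induction so that the successive fixed loci remain within the scope of the statement — ultimately reaching semisimple representations, whose quiver Grassmannians are products of ordinary Grassmannians — is a further point that needs care.
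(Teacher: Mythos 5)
Your first construction (the universal quiver Grassmannian $\mathcal{G}$ as a vector bundle over $\prod_i \Gr(e_i,d_i)$, the open dense orbit of the rigid point, and the specialness of $\Aut(M)$) is sound, but it only yields that $\Gr_e(M)$ is \emph{stably} rational: you obtain $V\times \Gr_e(M)$ birational to an open subset of $\mathcal{G}$, and since $\dim V>0$ you cannot cancel the factor $V$. Stable rationality does not imply rationality, so the theorem is not proved by this half, as you acknowledge. The proof therefore has to live entirely in your second half, which remains a sketch with the decisive steps unexecuted. Moreover, the obstacle you single out there rests on a false premise: for $M$ rigid the quiver Grassmannian $\Gr_e(M)$ is a \emph{smooth} projective irreducible variety of dimension $\langle e,d-e\rangle$ (Caldero--Reineke, Wolf). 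This is exactly what makes the Bia{\l}ynicki--Birula method work directly on $\Gr_e(M)$: for a generic one-parameter subgroup the attractor of each fixed component is an affine bundle over it, and the unique open attractor exhibits $\Gr_e(M)$ as birational to an affine bundle over one fixed component. There is no need to leverage the ambient family $\mathcal{G}$, and no singularity issue to control.

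Even granting that, your sketch is missing the ingredients that make the induction close up. First, after reducing to $M$ indecomposable exceptional, the grading coming from the tree-module structure produces a torus action whose fixed components are quiver Grassmannians of a lift $\hat M$ of $M$ to the universal \emph{abelian} covering quiver $\hat Q$; one must prove that $\hat M$ is again exceptional (the paper does this by decomposing the standard four-term $\Hom$--$\Ext$ sequence over the character group), otherwise the inductive hypothesis does not apply to the fixed components. Second, a single covering step does not make the quiver a tree: one has to iterate and show that the minimal length of reduced unoriented cycles strictly increases at each step, so that after finitely many iterations the support of the lift is a tree. Third, the terminal case is not ``semisimple representations'' but rigid representations of tree-shaped quivers, and this base case needs its own argument --- in the paper, induction on the number of vertices, peeling off a leaf $l$ and showing that over a dense open subset the forgetful map $\Gr_e(M)\to\Gr_{e'}(M')$ is a Grassmann bundle, together with the fact that $M'$ is again rigid. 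Without these three steps the localization argument does not terminate in a case you can actually solve.
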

	This narrows down the class of possible counterexamples to the conjectured existence of a cellular decomposition.
	
	The proof of Theorem \ref{t:main} relies on two (iterated) torus localizations. First, we define a torus action on $\Gr_e(M)$ to reduce to the case of an exceptional representation. This is Lemma \ref{l:red1}. A second torus action reduces rationality of a quiver Grassmannian of an exceptional representation of $Q$ to rationality of an exceptional representation of the universal abelian covering quiver. This is done in Lemma \ref{l:red2}. Applying this construction a finite number of times we arrive at an exceptional, in particular rigid, representation of a tree. This case is solved directly in Lemma \ref{l:treeRational}.
	
	\begin{ack*}
		I would like to thank Giovanni Cerulli Irelli, Francesco Esposito, and Thorsten Weist for helpful discussions on the paper. I am grateful to Markus Reineke for suggesting a simpler proof of Lemma \ref{l:rigid}.
	\end{ack*}
	
	\section{Generalities on Quiver Grasmmannians}
	
	Let $Q$ be a quiver, i.e.\ an oriented graph. We denote its set of vertices by $Q_0$ and its set of arrows by $Q_1$. For an arrow $\alpha \in Q_1$ let $s(\alpha)$ and $t(\alpha)$ its starting vertex and terminal vertex, respectively. Let $k$ be an algebraically closed field. Let $M$ be a finite-dimensional representation of $Q$ over $k$. All the representations considered in this article are assumed to be finite-dimensional. For elementary facts on representations of quivers, we refer to \cite[Chap.\ II]{ASS:06}. Recall that a subrepresentation of $M$ is a collection $(U_i)_{i \in Q_0}$ of subspaces $U_i \sub M_i$ which satisfy $M_\alpha(U_{s(\alpha)}) \sub U_{t(\alpha)}$ for every $\alpha \in Q_1$. Fix a sub--dimension vector $e \in \smash{\Z_{\geq 0}^{Q_0}}$ of $d := \dimvect M$, i.e.\ $e_i \leq d_i$ for every $i \in Q_0$. The set 
	$$
		\Big\{ (U_i)_i \in \prod_{i \in Q_0} \Gr_{e_i}(M_i) \mid M_\alpha(U_{s(\alpha)}) \sub U_{t(\alpha)} \text{ (all $i \in Q_0$)} \Big\}
	$$
	of all subrepresentations of $M$ is a Zariski-closed subset of (the set of $k$-valued points of) the product $\prod_{i \in Q_0} \Gr_{e_i}(M_i)$ of Grassmannians. There is a natural way to make this closed subset into a scheme. For this let $\UU_j$ be the pull-back along the projection $\prod_i \Gr_{e_i}(M_i) \to \Gr_{e_j}(M_j)$ of the universal rank $e_j$-bundle on $\Gr_{e_j}(M_j)$; it is a subbundle of the trivial bundle with fiber $M_j$. Let $\QQ_j$ be the pull-back along the same map of the universal quotient bundle on $\Gr_{e_j}(M_j)$. Let $s_M$ be the global section of the bundle $\bigoplus_{\alpha \in Q_1} \smash{\UU_{s(\alpha)}^\vee} \otimes \QQ_{t(\alpha)}$ which, over a (closed) point $(U_i)_i \in \prod_i \Gr_{e_i}(M_i)$, is given by the sum of the linear maps 
	$$
		U_i \to M_i \xto{}{M_\alpha} M_j \to M_j/U_j.
	$$ 
	The set of $k$-valued points of the zero scheme $Z(s_M)$ of this section is precisely the set of subrepresentations $U$ of $M$. 
	
	\begin{defn*}
		The quiver Grassmannian $\Gr_e(M)$ of subrepresentations of $M$ of dimension vector $e$ is defined as the zero scheme $Z(s_M)$.
	\end{defn*}
	
	A representation $M$ of $Q$ is called rigid if $\Ext^1(M,M) = 0$. For a rigid representation $M$ the quiver Grassmannian $\Gr_e(M)$ is particularly nice:

	\begin{lem}
		If $M$ is a rigid representation then the scheme $\Gr_e(M)$ is a smooth projective variety of dimension $\langle e,d-e \rangle$; in particular it is reduced and irreducible.
	\end{lem}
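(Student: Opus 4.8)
The plan is to read off smoothness and the dimension directly from the defining section $s_M$, and then to deduce irreducibility by a spreading-out argument over the representation variety of $Q$. Throughout I may assume $\Gr_e(M) = Z(s_M)$ is nonempty, as otherwise there is nothing to prove.

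First I would fix a closed point $U = (U_i)_i$ of $Z(s_M)$, that is, a subrepresentation $U \sub M$ with $\dimvect U = e$, and differentiate $s_M$ at $U$. The tangent space of $\prod_i \Gr_{e_i}(M_i)$ at $U$ is $\bigoplus_i \operatorname{Hom}(U_i, M_i/U_i)$, while the fibre of $\bigoplus_\alpha \UU_{s(\alpha)}^\vee \otimes \QQ_{t(\alpha)}$ at $U$ is $\bigoplus_\alpha \operatorname{Hom}(U_{s(\alpha)}, M_{t(\alpha)}/U_{t(\alpha)})$. Evaluating $s_M$ on the first-order deformation that replaces each $U_i$ by the graph of $\varepsilon\,\tilde\phi_i$, where $\tilde\phi_i \colon U_i \to M_i$ lifts a given $\phi_i \colon U_i \to M_i/U_i$, and using $M_\alpha(U_{s(\alpha)}) \sub U_{t(\alpha)}$, one finds $(d_U s_M)(\phi)_\alpha \colon u \mapsto M_\alpha(\tilde\phi_{s(\alpha)}u) - \tilde\phi_{t(\alpha)}(M_\alpha u) \bmod U_{t(\alpha)}$. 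This is exactly the differential of the standard two-term complex $\bigoplus_i \operatorname{Hom}(U_i, (M/U)_i) \to \bigoplus_\alpha \operatorname{Hom}(U_{s(\alpha)}, (M/U)_{t(\alpha)})$ whose cohomology computes $\operatorname{Hom}_Q(U, M/U)$ in degree $0$ and $\Ext^1(U, M/U)$ in degree $1$. Hence the Zariski tangent space of $\Gr_e(M)$ at $U$ is $\operatorname{Hom}_Q(U, M/U)$, of dimension $\langle e, d - e \rangle + \dim \Ext^1(U, M/U)$.

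Next I would check that the obstruction $\Ext^1(U, M/U)$ vanishes. Applying $\operatorname{Hom}_Q(M, -)$ to the exact sequence $0 \to U \to M \to M/U \to 0$ and using that $kQ$ is hereditary yields a surjection $\Ext^1(M, M) \to \Ext^1(M, M/U)$, whose source is $0$ by rigidity; applying $\operatorname{Hom}_Q(-, M/U)$ to the same sequence yields a surjection $\Ext^1(M, M/U) \to \Ext^1(U, M/U)$. So $\Ext^1(U, M/U) = 0$ and $\dim_k T_U \Gr_e(M) = \langle e, d - e \rangle$ for every closed point $U$. On the other hand, $Z(s_M)$ is the zero locus of a section of a vector bundle of rank $\sum_\alpha e_{s(\alpha)}(d_{t(\alpha)} - e_{t(\alpha)})$ on the smooth variety $\prod_i \Gr_{e_i}(M_i)$, so each of its irreducible components has dimension at least $\langle e, d - e \rangle$. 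Squeezing this against the tangent-space bound shows that $\Gr_e(M)$ is smooth and of pure dimension $\langle e, d - e \rangle$, hence reduced, and it is projective as a closed subscheme of a product of Grassmannians.

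For irreducibility I would spread out over the representation variety $R$ of $Q$ of dimension vector $d$: let $\widetilde{\Gr}$ be the incidence scheme of pairs $(N, U)$ with $U$ a subrepresentation of $N$ of dimension vector $e$. Projecting to $\prod_i \Gr_{e_i}(k^{d_i})$ realizes $\widetilde{\Gr}$ as the total space of a vector bundle, so $\widetilde{\Gr}$ is smooth and irreducible, and the $\operatorname{GL}_d$-equivariant projection $\pi \colon \widetilde{\Gr} \to R$ has fibre $\Gr_e(N)$ over $N$. Since $M$ is rigid and $R$ is an affine space, the orbit $\operatorname{GL}_d \cdot M$ is dense and open in $R$, so $\pi^{-1}(\operatorname{GL}_d \cdot M)$ is a dense open, hence irreducible, subset of $\widetilde{\Gr}$ which fibres over $\operatorname{GL}_d \cdot M \cong \operatorname{GL}_d / \operatorname{Aut}(M)$ with fibre $\Gr_e(M)$. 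As $\operatorname{Aut}(M)$ is open in the vector space $\operatorname{End}(M)$, it is connected, hence acts trivially on the finite set of connected components of $\Gr_e(M)$; therefore these components pull back to pairwise disjoint open subsets of the irreducible space $\pi^{-1}(\operatorname{GL}_d \cdot M)$, which forces $\Gr_e(M)$ to be connected and therefore irreducible. I expect the main obstacle to be the identification of $d_U s_M$ with the complex computing $\operatorname{Hom}_Q$ and $\Ext^1$ in the second paragraph; the rest --- heredity of $kQ$, density of the rigid orbit, and the standard codimension bound for zero loci of sections --- is routine.
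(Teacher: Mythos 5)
Your proof is correct, but it is worth saying up front that the paper does not actually prove this lemma: its ``proof'' consists of two citations, to \cite[Thm.\ 4.12]{Wolf:09} for irreducibility and to \cite[Cor.\ 2]{CR:08} for smoothness and the dimension formula. What you have written is, in effect, a reconstruction of the arguments in those references. Your smoothness argument is exactly the Caldero--Reineke one: identify the intrinsic derivative of $s_M$ at a subrepresentation $U$ with the differential of the standard two-term complex, so that $T_U\Gr_e(M) \cong \Hom_Q(U,M/U)$; kill the obstruction $\Ext^1(U,M/U)$ using rigidity of $M$ together with heredity of $kQ$; and squeeze the resulting tangent-space dimension $\langle e,d-e\rangle$ against the Krull lower bound for the zero locus of a section of a rank-$\sum_\alpha e_{s(\alpha)}(d_{t(\alpha)}-e_{t(\alpha)})$ bundle. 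Your irreducibility argument via the universal quiver Grassmannian $\widetilde{\Gr}\to R$ (a vector bundle over $\prod_i\Gr_{e_i}(k^{d_i})$, hence smooth and irreducible) and the openness of the orbit of a rigid module (Voigt's lemma) is essentially Schofield's/Wolf's. The only step I would ask you to spell out is why the images of $\GL_d\times C_j$ under $(g,U)\mapsto(gM,gU)$ are \emph{open} in $\pi^{-1}(\GL_d\cdot M)$ and not merely disjoint constructible sets covering it: this follows because that map has all fibres equal to $\operatorname{Aut}(M)$-torsors of constant dimension between smooth varieties, hence is flat by miracle flatness and therefore open; alternatively one can quote that $\pi^{-1}(\GL_d\cdot M)\to\GL_d\cdot M$ is a fibre bundle associated to the principal $\operatorname{Aut}(M)$-bundle $\GL_d\to\GL_d/\operatorname{Aut}(M)$. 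With that gloss the argument is complete, and it has the virtue of making the paper self-contained where the original simply defers to the literature.
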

	
	In the above lemma, $d$ is the dimension vector of $M$ and $\langle \blank,\blank \rangle$ is the Euler form of the quiver $Q$.
	
	\begin{proof}
		Irreducibility was shown in \cite[Thm.\ 4.12]{Wolf:09}. In \cite[Cor.\ 2]{CR:08}, smoothness and the dimension formula are proved.
	\end{proof}

	\section{Covering Quivers}
	
	We introduce covering quivers of a quiver without relations. The main reference for us is Weist's paper \cite{Weist:13}. The definitions of a cover and a universal cover is taken from \cite[App.\ 2]{Zhang:91}.
	
	\begin{defn*}
		Let $Q$ be a quiver.
		A cover of $Q$ is a pair consisting of a quiver $C$ and a morphism $c = (c_0,c_1): C \to Q$ of quivers which is surjective, both on vertices and on arrows, such that for any $i \in Q_0$ and any $k \in C_0$ with $c_0(k) = i$ the map $c_1$ on arrows induces bijections 
		\begin{align*}
			\{ \beta \in C_1 \mid s(\beta) = k \} &\xto{}{1:1} \{\alpha \in Q_1 \mid s(\alpha) = i \} \\
			\{ \beta \in C_1 \mid t(\beta) = k \} &\xto{}{1:1} \{ \alpha \in Q_1 \mid t(\alpha) = i \}.
		\end{align*}
	\end{defn*}
	
	Let $\bar{Q}$ be the double quiver of $Q$. It has vertices $\bar{Q}_0 = Q_0$ and arrows $\bar{Q}_1 = \{ \alpha^{\epsilon} \mid \alpha \in Q_1,\ \epsilon \in \{\pm 1\}\}$ which are directed as follows:
	\begin{align*}
		s(\alpha^{\epsilon}) &= \begin{cases} s(\alpha) & \text{if } \epsilon = +1 \\ t(\alpha) & \text{if } \epsilon = -1 \end{cases}
		& t(\alpha^{\epsilon}) &= \begin{cases} t(\alpha) & \text{if } \epsilon = +1 \\ s(\alpha) & \text{if } \epsilon = -1 \end{cases}
	\end{align*}
	An unoriented path of $Q$ is by definition a path in $\bar{Q}$. The set of all unoriented paths is denoted $\bar{Q}_*$. For standard notions for paths of quivers we refer to \cite[II.1]{ASS:06}. If an unoriented path of $Q$ is not a lazy path of $\bar{Q}$ then it is completely determined by a non-empty sequence $p = \alpha_r^{\epsilon_r}\ldots\alpha_1^{\epsilon_1}$ such that $s(\alpha_{\nu+1}^{\epsilon_{\nu+1}}) = t(\alpha_\nu^{\epsilon_\nu})$ for every $\nu=1,\ldots,n-1$. The unoriented path $p$ is called an unoriented cycle if $s(p) = t(p)$. An unoriented path $p$ is called reduced if $l(p) > 0$ and if it does not contain an expression of the form $\alpha\alpha^{-1}$ or $\alpha^{-1}\alpha$ as a subword. 
	
	A quiver $Q$ is called connected if for any two vertices $i,j \in Q_0$ there exists an unoriented path $p$ such that $s(p) = i$ and $t(p) = j$. A quiver is called tree-shaped if it is connected and if there are no reduced unoriented cycles in $Q$.
	
	\begin{defn*}
		A universal cover of a quiver $Q$ is a cover $C$ which is tree-shaped.
	\end{defn*}

	\begin{lem} \label{l:univ}
		Let $Q$ be a quiver.
		\begin{enumerate}
			\item Let $c: C \to Q$ be a universal cover and let $d: D \to Q$ be any cover. Then there exists a morphism of quivers $p: C \to D$ such that $d \circ p = c$.
			\item If $Q$ is connected then it admits a universal cover.
		\end{enumerate}
	\end{lem}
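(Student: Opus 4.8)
The whole statement is the quiver-theoretic shadow of the classical theory of covering spaces, unoriented paths playing the role of paths and a path ``up to homotopy'' meaning up to iterated deletion of subwords $\alpha\alpha^{-1}$ and $\alpha^{-1}\alpha$. I would start by isolating the two facts that drive everything. \emph{Unique path lifting:} for any cover $d\colon D\to Q$, any vertex $k\in D_0$ and any unoriented path $p$ of $Q$ with $s(p)=d_0(k)$ there is exactly one unoriented path $\tilde p$ of $D$ with $s(\tilde p)=k$ and $d(\tilde p)=p$; this is an immediate induction on $l(p)$, the inductive step being one of the two bijections in the definition of a cover (which one depends on the sign of the outermost letter of $p$). \emph{Homotopy invariance:} if $p$ and $p'$ are homotopic then their lifts starting at a common vertex have the same terminal vertex, since a subword $\alpha\alpha^{-1}$ of $p$ lifts to a subword $\beta\beta^{-1}$ of $\tilde p$ whose deletion does not move the endpoint. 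I would also use the standard fact that every homotopy class of unoriented paths contains a unique reduced representative (confluence of free reduction, as for words in a free group; cf.\ \cite[II.1]{ASS:06}), and the resulting groupoid structure on homotopy classes, with $p^{-1}$ inverse to $p$.

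For part (2) fix a vertex $i_0\in Q_0$ and let $C_0$ be the set of homotopy classes $[p]$ of unoriented paths $p$ of $Q$ with $s(p)=i_0$; set $c_0([p])=t(p)$, which is well defined. For $[p]\in C_0$ and $\alpha\in Q_1$ with $s(\alpha)=c_0([p])$, introduce exactly one arrow $([p],\alpha)$ of $C$, running from $[p]$ to $[\alpha^{+1}p]$, and set $c_1([p],\alpha)=\alpha$. The two required bijections are then tautological: the arrows of $C$ with source $[p]$ are precisely the $([p],\alpha)$ with $s(\alpha)=c_0([p])$, and an arrow of $C$ with terminus $[p]$ and $c_1$-image $\alpha$ (so $t(\alpha)=c_0([p])$) must be $([\alpha^{-1}p],\alpha)$. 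The quiver $C$ is connected: reading $p=\alpha_r^{\epsilon_r}\cdots\alpha_1^{\epsilon_1}$, the sequence of arrows $([\alpha_{\nu-1}^{\epsilon_{\nu-1}}\cdots\alpha_1^{\epsilon_1}],\alpha_\nu)^{\epsilon_\nu}$ of $C$ is an unoriented path from the class of the lazy path at $i_0$ to $[p]$.

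It remains to show $C$ is tree-shaped, i.e.\ has no reduced unoriented cycle. A short induction on length shows that the lift to $C$ starting at $[p]$ of an unoriented path $v$ of $Q$ with $s(v)=c_0([p])$ ends at $[vp]$. Suppose now $w$ is a reduced unoriented cycle of $C$ based at $[p]$ and put $v=c(w)$. Then $v$ is reduced: two cancelling consecutive letters of $v$ would come from two arrows of $C$ with equal $c_1$-image and a common endpoint on the same side, hence equal by the injectivity in the cover axiom, contradicting reducedness of $w$. But $w$ is the unique lift of $v$ starting at $[p]$, so by the previous computation $[vp]=[p]$, whence $v$ is null-homotopic; as $v$ is nonempty and reduced this is impossible. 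Therefore $C$ is a universal cover, which proves part (2).

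For part (1) let $c\colon C\to Q$ be a universal cover and $d\colon D\to Q$ any cover. Fix $\tilde i_0\in C_0$, put $i_0=c_0(\tilde i_0)$, and choose $\bar i_0\in D_0$ with $d_0(\bar i_0)=i_0$ (possible since $d_0$ is surjective). As $C$ is tree-shaped, each $\tilde j\in C_0$ is joined to $\tilde i_0$ by a \emph{unique} reduced unoriented path $\tilde w_{\tilde j}$ (two such would differ by a reduced unoriented cycle at $\tilde i_0$). Define $p_0(\tilde j)$ to be the terminal vertex of the lift to $D$, starting at $\bar i_0$, of $c(\tilde w_{\tilde j})$; then $d_0\circ p_0=c_0$ because a lift projects, under $d$, back to the path it lifts. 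For $\tilde\beta\in C_1$ with $c_1(\tilde\beta)=\alpha$, let $p_1(\tilde\beta)$ be the unique arrow of $D$ with source $p_0(s(\tilde\beta))$ and $d_1$-image $\alpha$ (this exists and is unique by the cover axiom for $d$, and makes sense because $d_0(p_0(s(\tilde\beta)))=c_0(s(\tilde\beta))=s(\alpha)$). The identity $t(p_1(\tilde\beta))=p_0(t(\tilde\beta))$ then follows from homotopy invariance: the paths $\tilde w_{t(\tilde\beta)}$ and $\tilde\beta^{+1}\tilde w_{s(\tilde\beta)}$ of $C$ are homotopic (their ratio is a cycle at $\tilde i_0$, hence null-homotopic), so the lifts from $\bar i_0$ of their $c$-images share a terminal vertex, which by construction is $p_0(t(\tilde\beta))$ for the first and $t(p_1(\tilde\beta))$ for the second. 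Hence $p=(p_0,p_1)\colon C\to D$ is a morphism of quivers with $d\circ p=c$. The only point in the whole proof that is not purely formal is the verification that the quiver $C$ constructed for part (2) has no reduced unoriented cycles, so that is the step I would expect to demand the most care; everything else runs once the two lifting lemmas are in place.
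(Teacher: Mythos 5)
Your proof is correct, and it does more than the paper does: the paper only proves part (1), deferring part (2) entirely to \cite[Prop.~3.12]{Weist:13}, whereas you construct the universal cover from scratch as the quiver of homotopy classes of unoriented paths based at $i_0$ and verify the cover axioms and tree-shapedness directly. For part (1) the two arguments rest on the same two pillars --- unique arrow lifting in $D$ coming from the cover bijections, and the absence of reduced unoriented cycles in $C$ forcing well-definedness --- but they are organized differently. The paper runs a breadth-first induction on the distance to a base vertex $k_0$, extending $p$ one layer of arrows at a time and invoking tree-shapedness only to see that the new endpoints $k_\nu'$ at distance $d$ are pairwise distinct; it never explicitly checks that the two ways of reaching a vertex along the tree agree, because in a tree there is only one way. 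You instead define $p_0(\tilde j)$ globally via the unique reduced path $\tilde w_{\tilde j}$ and then must (and do) verify the compatibility $t(p_1(\tilde\beta))=p_0(t(\tilde\beta))$ by homotopy invariance of lifts; this is slightly heavier machinery (you need confluence of free reduction and the groupoid of homotopy classes) but it makes explicit a coherence check that the paper's induction absorbs silently, and the same two lemmas immediately power your proof of part (2). Two cosmetic points: with the paper's convention a reduced path has length $>0$, so for $\tilde j=\tilde i_0$ you should take $\tilde w_{\tilde i_0}$ to be the lazy path; and in the uniqueness of $\tilde w_{\tilde j}$ the concatenation $\tilde w'^{-1}\tilde w$ need not itself be reduced --- you need to pass to its free reduction and use uniqueness of reduced representatives to conclude it is a nonempty reduced cycle, which is what you implicitly do. Neither affects correctness.
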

	
	The second statement was proved in \cite[Prop.\ 3.12]{Weist:13} where it is shown that every connected component of the universal covering quiver is a universal cover.
	The first statement is well known. As we could not find a formal proof we give one here.
	
	\begin{proof}[Proof of (1)]
		For a connected quiver define $\dist(i,j) := \min \{ l(p) \mid p \in \bar{Q}_* \text{ with } s(p) = i \text{ and } t(p) = j \}$ for vertices $i,j \in Q_0$ and $\dist(\alpha,j) = \min\{ \dist(s(\alpha),j), \dist(t(\alpha),j)\}$.
		As a universal cover is by definition connected, the quiver $Q$ must be connected as well. Fix a vertex $i_0 \in Q_0$. Fix $k_0 \in C_0$ and $l_0 \in D_0$ such that $c_0(k_0) = d(l_0) = i_0$. We define $p$ inductively over the distance. Define $p_0(k_0) = l_0$. Now let $d > 0$ and assume inductively that $p_0$ has been defined on all vertices of $C$ within a distance of $d-1$ to $k_0$ and $p_1$ on all arrows within a distance of $d-2$. Let $\beta_1,\ldots,\beta_n$ be those arrows of $C$ such that $\dist(\beta_\nu,k_0) = d-1$. Then $\beta_\nu$ is adjacent to $k_\nu \in C_1$ such that $\dist(k_\nu,k_0) = d-1$. Let $k_\nu'$ be the other endpoint of $\beta_\nu$. Then $\dist(k_\nu',k_0) = d$ and $k_\nu' \neq k_\mu'$ for $\nu \neq \mu$ (otherwise there would be unoriented cycles). Let $\gamma_\nu$ be the unique arrow adjacent to $p_0(k_\nu)$ such that $c_1(\beta_\nu) = d_1(\gamma_\nu)$. Define $p_1(\beta_\nu) = \gamma_\nu$ and $p_0(k_\nu') = l_\nu'$ where $l_\nu'$ is the other endpoint of $\gamma_\nu$. Proceeding in this way we reach every arrow and every vertex of $C$ as it is connected.
	\end{proof}
	
	Let $c: C \to Q$ be an arbitrary cover. It induces a functor $c: \operatorname{Rep}(C) \to \operatorname{Rep}(Q)$ as follows: for a representation $N$ of $C$ define $c(N)$ as the representation of $Q$ which on vertices is given by 
	$c(N)_i = \bigoplus_{c(k) = i} N_k$ and for an arrow $\alpha: i \to j$ the linear map $c(N)_\alpha: c(N)_i \to c(N)_j$ is defined by
	$$
		c(N)_\alpha\Big(\sum_{c_0(k)=i} v_{i,k}\Big) = \sum_{c_0(k)=i} N_{\beta_k}(v_{i,k})
	$$
	where for every $k$ such that $c_0(k) = i$, the arrow $\beta_k$ is the unique arrow of $C$ such that $s(\beta_k) = k$ and $c_1(\beta_k) = \alpha$. For a representation $M$ of $Q$, a representation $N$ of $C$ for which $c(N) = M$ is called a lift of $M$ to $C$.
	
	Let $M$ be an exceptional representation of $Q$, that means $M$ is rigid and indecomposable. Ringel shows in \cite{Ringel:98} that $M$ is a tree module. As tree modules lift to a universal cover, we obtain:
	
	\begin{lem} \label{l:ringel}
		Let $c: C \to Q$ be a universal cover of $Q$. Any exceptional representation of $Q$ possesses a lift to $C$.
	\end{lem}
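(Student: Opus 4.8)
The statement to prove is Lemma \ref{l:ringel}: any exceptional representation $M$ of $Q$ lifts to a universal cover $c \colon C \to Q$. The plan is to combine two inputs that are quoted in the text just before the statement. First, by Ringel's theorem \cite{Ringel:98}, an exceptional representation $M$ is a tree module; that is, there is a basis $B_i$ of each $M_i$ such that, for every arrow $\alpha \colon i \to j$, the matrix of $M_\alpha$ with respect to $B_i$ and $B_j$ has the property that its associated bipartite "coefficient graph" (vertices $B = \bigsqcup_i B_i$, an edge $b \to b'$ whenever the $(b',b)$-entry of some $M_\alpha$ is nonzero) is a tree — or at least a forest, after which one argues connectedness component by component. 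Second, one shows that any tree module lifts along any cover, and in particular along the universal cover $C$.

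So the heart of the argument is the following claim: given a $k$-basis of $M$ whose coefficient graph $T$ is a tree (equivalently a forest), one can lift $M$ to $C$. I would argue as follows. The coefficient graph $T$ comes with a map to $\bar Q$: each basis vector $b \in B_i$ maps to $i$, and each edge of $T$ corresponding to a nonzero entry of $M_\alpha$ maps to the arrow $\alpha$ (with an orientation). Pick a base point in $T$ and a vertex $k_0 \in C_0$ over its image in $Q_0$. Since $T$ is a tree and $C$ is a cover, the unique-path-lifting property of covers lets me lift the inclusion-type map $T \to \bar Q$ to a map $T \to \bar C$: starting at $k_0$, walk along the edges of $T$, and at each step use the bijection on arrows (in the appropriate direction) guaranteed by the definition of a cover to lift the next edge; since $T$ has no cycles this is consistent and well defined. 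This assigns to each basis vector $b \in B_i$ a vertex $\tilde b \in C_0$ with $c_0(\tilde b) = i$. Now define the lift $N$ of $M$ by $N_k = \operatorname{span}_k\{ b : \tilde b = k\}$ for $k \in C_0$, and for an arrow $\beta \colon k \to l$ in $C_1$ let $N_\beta$ be the restriction of $M_{c_1(\beta)}$ to the relevant coefficient entries — concretely, $N_\beta(b) = (\text{the } b\text{-to-}b' \text{ entry of } M_{c_1(\beta)}) \cdot b'$ summed over those $b'$ with $\tilde b' = l$. One then checks that the lifted-edge bookkeeping is exactly arranged so that applying the functor $c$ of the previous paragraph — which on vertices forms $\bigoplus_{c_0(k)=i} N_k$ and on arrows sends $v_{i,k}$ to $N_{\beta_k}(v_{i,k})$ with $\beta_k$ the unique arrow out of $k$ over $\alpha$ — recovers $M_\alpha$ for every $\alpha \in Q_1$; this is essentially immediate because every nonzero coefficient of $M_\alpha$ corresponds to exactly one edge of $T$, and that edge was lifted to exactly one arrow of $C$ of the form $\beta_k$.

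Finally, to pass from "lifts along some cover" to "lifts along the universal cover" I would invoke Lemma \ref{l:univ}(1): the universal cover $C$ maps to any cover $D$ via a morphism $p$ with $d \circ p = c$, and pushing forward along $p$ turns a lift over $D$ into a lift over $C$. But in fact the direct construction above already produces a lift to the universal cover itself, since the unoriented path lifting I used works verbatim in $C$ (which is tree-shaped); so this last step is only needed if one prefers to black-box Ringel's result as "tree module" without re-examining the coefficient graph.

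The main obstacle is the compatibility check in the second paragraph: one has to be careful that the orientations of edges in the coefficient graph $T$ are lifted using the correct one of the two arrow-bijections in the definition of a cover (the $s$-fiber bijection when traversing an edge forward, the $t$-fiber bijection when traversing it backward), and that after lifting, the summation defining $c(N)_\alpha$ genuinely collapses to the single surviving term matching $M_\alpha$'s coefficient. A secondary subtlety is that Ringel's "tree module" may a priori give a forest rather than a connected tree when $M_i$ has several basis vectors not linked by any $M_\alpha$; but this causes no trouble — one simply chooses a base vertex in each connected component of $T$, lifts each component independently, and takes the direct sum of the resulting partial lifts.
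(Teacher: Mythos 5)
Your proposal follows exactly the route the paper takes: invoke Ringel's theorem that an exceptional representation is a tree module, then lift the tree module along the cover via unique path lifting on its coefficient quiver. The paper states the second step without proof, so your argument is the same approach with the standard details (correctly) filled in.
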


	We need an abelian approximation to the universal cover. For this we use the universal abelian covering quiver which was introduced by Weist in \cite{Weist:13}.
	The universal abelian covering quiver $\hat{Q}$ of $Q$ is defined as
	\begin{align*}
		\hat{Q}_0 &= Q_0 \times \Z^{Q_1} & \hat{Q}_1 &= Q_1 \times \Z^{Q_1}
	\end{align*}
	where, for a pair $(\alpha,\chi) \in \hat{Q}_1$ we define $s(\alpha,\chi) = (s(\alpha),\chi)$ and $t(\alpha,\chi) = (t(\alpha),\chi+\alpha)$. The morphism $\smash{\hat{c}}: \smash{\hat{Q}} \to Q$ of quivers defined by $\smash{\hat{c}_0}(i,\chi) = i$ and $\smash{\hat{c}_1}(\alpha,\chi) = \alpha$ is a cover. A representation $M$ of $Q$ has a lift $\smash{\hat{M}}$ to $\smash{\hat{Q}}$ if and only if there exist direct sum decompositions $M_i = \smash{\bigoplus_\chi} \smash{\hat{M}_{i,\chi}}$ in such a way that $M_\alpha(\smash{\hat{M}_{s(\alpha),\chi}}) \sub \smash{\hat{M}_{t(\alpha),\chi+\alpha}}$.
	
	We want to show that rigidity and exceptionality are preserved for lifts. Let $\smash{\hat{M}}$ be a representation of $\smash{\hat{Q}}$ and let $M = \smash{\hat{c}(\hat{M})}$. Let $\xi \in X(T)$ and define $S_{\xi}(\smash{\hat{M}})$ as the representation with $S_\xi(\smash{\hat{M}})_{i,\chi} = \smash{\hat{M}_{i,\chi+\xi}}$ and $S_\xi(\smash{\hat{M}})_{\alpha,\chi} = \smash{\hat{M}_{\alpha,\chi+\xi}}$. This defines an endofunctor on the category of representations of $\smash{\hat{Q}}$.
	
	\begin{lem} \label{l:rigidCover}
		Let $\smash{\hat{M}}$ and $\smash{\hat{N}}$ be two representations of $\smash{\hat{Q}}$ and let $M = \smash{\hat{c}(\hat{M})}$ and $N = \smash{\hat{c}(\hat{N})}$. Then
		\begin{align*}
			\Hom_Q(M,N) &\cong \bigoplus_{\xi \in X(T)} \Hom_{\hat{Q}}(S_\xi(\hat{M}),\hat{N}) &
			\Ext_Q(M,N) &\cong \bigoplus_{\xi \in X(T)} \Ext_{\hat{Q}}(S_\xi(\hat{M}),\hat{N})
		\end{align*}
		In particular, if $M$ is rigid or exceptional, then so is $\smash{\hat{M}}$.
	\end{lem}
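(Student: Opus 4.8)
The plan is to establish the two Hom/Ext isomorphisms and then read off the "in particular" clause. The key structural fact is that, since $c = \hat{c}$ is a cover, the functor $\hat{c} \colon \operatorname{Rep}(\hat{Q}) \to \operatorname{Rep}(Q)$ has a very controlled behavior on morphism spaces. First I would make precise the decomposition of $\operatorname{Hom}_Q(M,N)$. A morphism $f \colon M \to N$ is a tuple $f_i \colon M_i \to N_i$; using the grading $M_i = \bigoplus_\chi \hat{M}_{i,\chi}$ and $N_i = \bigoplus_\psi \hat{N}_{i,\psi}$ one decomposes each $f_i$ into homogeneous components $f_i^{\psi,\chi} \colon \hat{M}_{i,\chi} \to \hat{N}_{i,\psi}$. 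The commutativity condition $f_{t(\alpha)} M_\alpha = N_\alpha f_{s(\alpha)}$ for an arrow $\alpha$, when read off in the bigraded pieces, says (because $M_\alpha$ shifts the grading $\chi \mapsto \chi + \alpha$ and likewise $N_\alpha$) that the components with $\psi - \chi = \xi$ fixed interact only among themselves. Concretely, collecting for a fixed $\xi \in \Z^{Q_1} = X(T)$ the components $f_i^{\chi+\xi,\chi}$ over all $i$ and all $\chi$ produces exactly a morphism $S_\xi(\hat{M}) \to \hat{N}$ of representations of $\hat{Q}$, and conversely. This gives the first isomorphism, and its naturality is immediate from the construction.

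Next I would extend this to $\operatorname{Ext}^1$. The cleanest route is to observe that $\hat{c}$ sends a projective resolution (or the canonical length-one projective presentation available for representations of a quiver without relations) of $\hat{M}$ to one of $M$; more precisely, since $\hat{c}$ is exact and sends indecomposable projectives $P_{(i,\chi)}$ of $\hat{Q}$ to the projective $P_i$ of $Q$, applying $\hat{c}$ to the standard projective presentation
\[
  0 \to \bigoplus_{\alpha \in \hat{Q}_1} P_{t(\alpha,\chi)} \otimes \hat{M}_{s(\alpha,\chi)} \to \bigoplus_{(i,\chi)} P_{(i,\chi)} \otimes \hat{M}_{i,\chi} \to \hat{M} \to 0
\]
yields the corresponding presentation of $M$. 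One then applies $\operatorname{Hom}_Q(-,N)$, uses the already-established Hom-isomorphism (which is compatible with the shift functors $S_\xi$, and $S_\xi$ is an exact autoequivalence of $\operatorname{Rep}(\hat{Q})$), and takes cokernels; the direct sum over $\xi$ commutes with everything in sight because all dimension vectors involved are finite, so only finitely many $\xi$ contribute nonzero terms. This produces the second isomorphism. Alternatively, one can argue via the Euler form together with the Hom- and $\operatorname{Hom}$-to-$\operatorname{Ext}$ exact sequence, but the projective-presentation argument is the most transparent and avoids case analysis.

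For the final assertion: if $M$ is rigid, then $0 = \operatorname{Ext}_Q^1(M,M) \cong \bigoplus_\xi \operatorname{Ext}_{\hat{Q}}^1(S_\xi(\hat{M}), \hat{M})$, and the summand at $\xi = 0$ is $\operatorname{Ext}_{\hat{Q}}^1(\hat{M},\hat{M})$, which therefore vanishes; hence $\hat{M}$ is rigid. If $M$ is moreover exceptional, then additionally $\operatorname{Hom}_Q(M,M) \cong \bigoplus_\xi \operatorname{Hom}_{\hat{Q}}(S_\xi(\hat{M}), \hat{M})$ is one-dimensional, so the $\xi = 0$ summand $\operatorname{End}_{\hat{Q}}(\hat{M})$ has dimension at most one; since it contains the identity it is exactly $k$, so $\hat{M}$ is a brick, and together with rigidity this makes $\hat{M}$ exceptional. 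The main obstacle I anticipate is purely bookkeeping: setting up the bigraded index conventions so that "$M_\alpha$ shifts $\chi$ by $\alpha$" matches the definition $t(\alpha,\chi) = (t(\alpha), \chi + \alpha)$ of $\hat{Q}$, and checking that the reindexing $f \mapsto (f^{\chi+\xi,\chi})_{i,\chi}$ really lands in morphisms of $\hat{Q}$-representations rather than merely $\hat{Q}_0$-graded vector space maps — i.e. verifying the arrow-compatibility in the lift. No single step is deep; the content is that covering functors are "$X(T)$-graded" on Hom and Ext, which is exactly the Galois-covering philosophy of \cite{Weist:13}.
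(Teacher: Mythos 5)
Your proposal is correct and follows essentially the same route as the paper: both arguments rest on the standard projective resolution and on the observation that the $X(T)$-grading of $\bigoplus_i \Hom(M_i,N_i)$ and $\bigoplus_\alpha \Hom(M_{s(\alpha)},N_{t(\alpha)})$ is respected by the differential $\phi$, so that kernel and cokernel (i.e.\ $\Hom$ and $\Ext$) decompose by $\xi$. The only cosmetic difference is that the paper carries out this decomposition directly on the four-term exact sequence for $M$ over $Q$, whereas you phrase the $\Ext$ part via pushing down the projective presentation of $\hat{M}$ and invoking naturality; the underlying computation is identical.
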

	
	\begin{proof}
		Consider the exact sequence
		$$
			0 \to \Hom_Q(M,N) \to \bigoplus_i \Hom(M_i,N_i) \xto{}{\phi} \bigoplus_\alpha \Hom(M_{s(\alpha)},M_{t(\alpha)}) \to \Ext_Q(M,N) \to 0.
		$$
		which arises by applying the functor $\Hom(\blank,N)$ to the standard projective resolution of $M$. The map $\phi$ is given by $\phi(\sum_i f_i) = \sum_\alpha N_\alpha f_{s(\alpha)} - f_{t(\alpha)} M_\alpha$. The second term of the exact sequence decomposes as
		\begin{align*}
			\bigoplus_i \Hom(M_i,N_i) &= \bigoplus_i \bigoplus_{\chi,\eta} \Hom(\hat{M}_{i,\chi},\hat{N}_{i,\eta}) = \bigoplus_\xi \bigoplus_{(i,\chi)} \Hom(\hat{M}_{i,\chi+\xi},\hat{N}_{i,\chi}) \\ 
			&= \bigoplus_\xi \bigoplus_{(i,\chi)} \Hom(S_\xi(\hat{M})_{i,\chi},\hat{N}_{i,\chi}).
		\end{align*}
		In the same vein we identify
		$$
			\bigoplus_\alpha \Hom(M_{s(\alpha)},M_{t(\alpha)}) = \bigoplus_\xi \bigoplus_{(\alpha,\chi)} \Hom(S_\xi(\hat{M})_{i,\chi},\hat{N}_{i,\chi+\alpha}).
		$$
		Using these identifications the map $\phi$ sends $\sum_\xi \sum_{(i,\chi)} f_{\xi,(i,\chi)}$ to
		$$
			\sum_\xi \sum_{(\alpha,\chi)} \hat{N}_{\alpha,\chi} f_{\xi,(s(\alpha),\chi)} - f_{\xi,(t(\alpha),\chi+\alpha)} \hat{M}_{\alpha,\chi+\xi}
		$$
		which shows the desired isomorphisms of the $\Hom$ and $\Ext$ groups. The consequence that rigidity and exceptionality are preserved for lifts is clear.
	\end{proof}	 

	In contrast to a universal cover the covers $\smash{\hat{Q}}$ may have reduced unoriented cycles. However, if we denote by $l_Q$ the minimal length of all reduced unoriented cycles in the underlying graph of $Q$, we see that this number gets bigger when passing to the universal abelian covering quiver:
	
	\begin{lem} \label{l:cycles}
		Let $Q$ be a quiver and let $\hat{Q}$ its universal abelian covering quiver. Then $l_Q < l_{\hat{Q}}$.
	\end{lem}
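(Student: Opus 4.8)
The plan is to compare reduced unoriented cycles of $Q$ with those of $\hat Q$ via the covering morphism $\hat c\colon\hat Q\to Q$, the decisive ingredient being the $\Z^{Q_1}$-grading carried by the vertex set of $\hat Q$. Throughout one may assume that $Q$ is not tree-shaped, i.e.\ that $l_Q<\infty$; this is the only case relevant for the applications.

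First I would extend $\hat c$ to a morphism of double quivers $\overline{\hat Q}\to\bar Q$ by $(\alpha,\chi)^{\epsilon}\mapsto\alpha^{\epsilon}$, checking from the formulas $s(\alpha,\chi)=(s(\alpha),\chi)$ and $t(\alpha,\chi)=(t(\alpha),\chi+\alpha)$ that it respects sources and targets. It then carries composable sequences to composable sequences of the same length, and cycles to cycles. The point is that it cannot create a backtracking: if the image of some unoriented path $\hat p$ of $\hat Q$ contained a subword $\alpha^{\epsilon}\alpha^{-\epsilon}$, then the corresponding composable pair in $\hat Q$ would be of the form $(\alpha,\chi)^{\epsilon}(\alpha,\chi')^{-\epsilon}$, and composability forces $\chi=\chi'$, so $\hat p$ itself already contained a backtracking. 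Hence $\hat c$ sends a reduced unoriented cycle of $\hat Q$ to a reduced unoriented cycle of $Q$ of the same length, and therefore $l_Q\le l_{\hat Q}$.

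For the strict inequality I would rule out the existence of a reduced unoriented cycle $\hat p$ of $\hat Q$ of length exactly $l_Q$. Its image $p=\hat c(\hat p)$ would then be a reduced unoriented cycle of $Q$ of minimal length, and such a cycle is necessarily simple. Indeed, minimality forces $p$ to be cyclically reduced (otherwise one could delete the cyclic backtracking to obtain a strictly shorter reduced cycle, unless $p$ is itself a backtracking), and a cyclically reduced cycle of minimal length cannot revisit a vertex, for cutting it at a repeated vertex would produce two strictly shorter reduced cycles --- the piece straddling the cyclic junction being reduced precisely because $p$ is cyclically reduced. In particular $p$ traverses every arrow at most once. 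Now I would track the $\Z^{Q_1}$-component of the vertices of $\hat Q$ visited by $\hat p$: writing $p=\alpha_r^{\epsilon_r}\cdots\alpha_1^{\epsilon_1}$, this component increases by $\epsilon_k\alpha_k$ at the $k$-th step (reading each $\alpha_k\in Q_1$ as the corresponding standard basis vector of $\Z^{Q_1}$), so around the closed walk $\hat p$ it changes by $\sum_k\epsilon_k\alpha_k$, which must vanish since $\hat p$ is closed. But $p$ has positive length and is simple, so some arrow $\beta$ occurs among the $\alpha_k$ exactly once, whence the $\beta$-coordinate of $\sum_k\epsilon_k\alpha_k$ equals $\pm1\ne0$, a contradiction. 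Thus $\hat Q$ possesses no reduced unoriented cycle of length $l_Q$, and together with $l_Q\le l_{\hat Q}$ this gives $l_Q<l_{\hat Q}$.

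Conceptually the last step just says that the minimal (hence simple) reduced unoriented cycle of $Q$ fails to lift to a cycle of $\hat Q$, because running once around it winds nontrivially in the $\Z^{Q_1}$-direction; I expect this winding obstruction to be the heart of the matter. The remaining ingredients --- the extension of $\hat c$ to double quivers and the classical fact that a shortest cycle in a graph is simple --- are routine; the one point demanding a little care is that the paper's notion of ``reduced'' only excludes backtracking subwords, so one first has to promote minimality to cyclic reducedness and then to simplicity before the grading computation becomes available.
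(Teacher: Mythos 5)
Your proof is correct and rests on exactly the same key computation as the paper's: the image in $Q$ of a cycle of $\hat{Q}$ satisfies $\sum_k \epsilon_k\alpha_k = 0$, which is incompatible with being a shortest (hence simple) reduced unoriented cycle. You merely run the argument in contrapositive form --- the paper takes a minimal reduced cycle of $\hat{Q}$ and extracts a strictly shorter reduced subcycle of its image in $Q$, while you first note $l_Q \le l_{\hat{Q}}$ and then rule out equality --- and along the way you spell out the cyclic-reducedness and simplicity points that the paper leaves implicit.
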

	
	This was proved implicitly in \cite[Prop.\ 3.12]{Weist:13}. We include a proof for the convenience of the reader.
	
	\begin{proof}
		Consider a reduced unoriented cycle $\hat{p} = (\alpha_r,\chi_r)^{\epsilon_r}\ldots(\alpha_1,\chi_1)^{\epsilon_1}$ of $\smash{\hat{Q}}$ which is of minimal length $r = \smash{l_{\hat{Q}}}$. Note that
		\begin{align*}
			s((\alpha,\chi)^{\epsilon}) &= \begin{cases} (s(\alpha),\chi) & \text{if } \epsilon = +1 \\ (t(\alpha),\chi+\alpha) & \text{if } \epsilon = -1 \end{cases} &
			t((\alpha,\chi)^{\epsilon}) &= \begin{cases} (t(\alpha),\chi+\alpha) & \text{if } \epsilon = +1 \\ (s(\alpha),\chi) & \text{if } \epsilon = -1 \end{cases} \\
			&= (s(\alpha^{\epsilon}),\chi+\frac{1-\epsilon}{2}\alpha) &			
			&= (t(\alpha^{\epsilon}),\chi+\frac{1+\epsilon}{2}\alpha)
		\end{align*}
		As $\hat{p}$ is a cycle, we obtain that $p := \hat{c}(\hat{p}) = \alpha_r^{\epsilon_r}\ldots\alpha_1^{\epsilon_1}$ is a cycle in the underlying graph of $Q$ and the equations
		\begin{align*}
			\chi_{\nu+1} + \frac{1-\epsilon_{\nu+1}}{2}\alpha_{\nu+1} &= \chi_\nu + \frac{1+\epsilon_\nu}{2}\alpha_\nu \\
			\chi_{1} + \frac{1-\epsilon_{1}}{2}\alpha_{1} &= \chi_n + \frac{1+\epsilon_n}{2}\alpha_n
		\end{align*}
		force
		$$
			\epsilon_1\alpha_1 + \ldots \epsilon_r\alpha_r = 0.
		$$
		This equation implies that for every arrow $\alpha$ which occurs in $p$ its formal inverse $\alpha^{-1}$ occurs as well (and vice versa). As the sequence of $\hat{p}$ is reduced hence so is $p$, and thus $\alpha$ and $\alpha^{-1}$ cannot appear as neighbors in the cyclic order. This means that the cycle $p$ visits the same vertex twice and hence possesses a proper subcycle.
	\end{proof}
	
	To get an approximation to the universal cover we iteratively define
	$$
		\hat{Q}^{(n)} := \hat{ \hat{Q}^{(n-1)} }
	$$
	for every $n > 1$; the quiver $\smash{\hat{Q}^{(0)}}$ is defined as $Q$. Denote the covering morphism with $\smash{\hat{c}_{n,n-1}}: \smash{\hat{Q}^{(n)}} \to \smash{\hat{Q}^{(n-1)}}$. The concatenation of these covering morphism yields a cover $\smash{\hat{c}^{(n)}}: \smash{\hat{Q}^{(n)}} \to Q$. From Lemma \ref{l:univ} we deduce
	
	\begin{lem} \label{l:covermaps}
		Let $c: C \to Q$ be a universal cover of $Q$. Then for every $n \geq 0$ there exists a morphism of quivers $\smash{p^{(n)}}: C \to \smash{\hat{Q}^{(n)}}$ such that $\smash{p^{(0)}} = c$ and 
		$$
			\hat{c}_{n,n-1} \circ p^{(n)} = p^{(n-1)}.
		$$
	\end{lem}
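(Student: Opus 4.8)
The plan is an induction on $n$, the case $n=0$ being the defining condition $p^{(0)}=c$. So assume $n\geq 1$ and that $p^{(n-1)}\colon C\to\hat{Q}^{(n-1)}$ has been constructed with $\hat{c}^{(n-1)}\circ p^{(n-1)}=c$ (for $n-1=0$ this just says $c=c$, and once $p^{(n)}$ is built the analogous identity holds automatically, since $\hat{c}^{(n)}\circ p^{(n)}=\hat{c}^{(n-1)}\circ\hat{c}_{n,n-1}\circ p^{(n)}=\hat{c}^{(n-1)}\circ p^{(n-1)}=c$). The key idea is \emph{not} to apply Lemma \ref{l:univ}(1) to the cover $\hat{c}^{(n)}\colon\hat{Q}^{(n)}\to Q$ directly, as that would lose the compatibility with $p^{(n-1)}$. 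Instead I want to view $p^{(n-1)}$ itself as a universal cover of a suitable connected component $D$ of $\hat{Q}^{(n-1)}$, view the restriction of $\hat{c}_{n,n-1}$ over $D$ as a cover of $D$, and invoke Lemma \ref{l:univ}(1) in that relative situation. It then produces $p^{(n)}\colon C\to\hat{Q}^{(n)}$ (with image in $\hat{c}_{n,n-1}^{-1}(D)$) satisfying $\hat{c}_{n,n-1}\circ p^{(n)}=p^{(n-1)}$, which is exactly the claim.

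Two points then need to be verified. First, that $p^{(n-1)}\colon C\to D$ is a cover, where $D$ is the connected component of $\hat{Q}^{(n-1)}$ containing the image of $p^{(n-1)}$ (this image is connected because $C$ is). The local bijection condition for $p^{(n-1)}$ at a vertex $k$ is a two-out-of-three argument: the bijection on arrows incident to $k$ induced by $c=\hat{c}^{(n-1)}\circ p^{(n-1)}$ factors through the bijection on arrows incident to $p^{(n-1)}_0(k)$ induced by the cover $\hat{c}^{(n-1)}$, so the intermediate map induced by $p^{(n-1)}$ is a composite of bijections. These bijections force the image of $p^{(n-1)}$ to be closed under incident arrows, hence to be a union of connected components of $\hat{Q}^{(n-1)}$; being connected and nonempty it equals $D$, so $p^{(n-1)}$ is onto $D$ and is therefore a cover, and a universal one since $C$ is tree-shaped. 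Second, that the restriction of $\hat{c}_{n,n-1}$ to the full subquiver $\hat{c}_{n,n-1}^{-1}(D)$ of $\hat{Q}^{(n)}$ is a cover of $D$; this is routine bookkeeping, the local bijections and surjectivity being inherited from those of $\hat{c}_{n,n-1}$ because $D$ is a full subquiver and one passes to the preimage of an entire connected component. The base case $n=1$ is this same argument with $\hat{Q}^{(1)}=\hat{Q}$, or simply Lemma \ref{l:univ}(1) applied to the universal cover $c$ and the cover $\hat{c}_{1,0}=\hat{c}$.

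The only genuinely substantive point, I expect, is that the image of $p^{(n-1)}$ is all of a connected component of $\hat{Q}^{(n-1)}$ and not merely a proper connected subquiver — equivalently, that $p^{(n-1)}$ is a cover onto $D$ rather than just a quiver morphism over $Q$. This is precisely what the transported local bijection property gives, as it shows the image is saturated under adjacency. Everything else is repeated use of Lemma \ref{l:univ}(1) together with the stability of the notion of a cover under composition and under restriction over connected components.
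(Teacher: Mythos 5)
Your proposal is correct and follows the paper's (only implicitly given) route: the paper offers no argument beyond the phrase ``From Lemma \ref{l:univ} we deduce,'' and your induction is a valid way to carry out that deduction. Your observation that applying Lemma \ref{l:univ}(1) directly to the cover $\smash{\hat{c}^{(n)}}\colon \smash{\hat{Q}^{(n)}}\to Q$ would not by itself yield the compatibility $\hat{c}_{n,n-1}\circ p^{(n)}=p^{(n-1)}$ is well taken, and your fix --- showing via the two-out-of-three argument that $p^{(n-1)}$ is itself a universal cover of the connected component $D$ containing its image and then invoking Lemma \ref{l:univ}(1) relative to $D$ and the cover $\hat{c}_{n,n-1}^{-1}(D)\to D$ --- is sound; an alternative repair would be to observe that the inductive construction in the proof of Lemma \ref{l:univ}(1) determines the lift uniquely once a base point is fixed, and to choose the base points in the $\smash{\hat{Q}^{(n)}}$ compatibly.
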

	
	The restriction of $p^{(n)}$ to its range is a cover. This implies that $\smash{p^{(n)}}$ induces a functor $\smash{p^{(n)}}: \operatorname{Rep}(C) \to \operatorname{Rep}(\smash{\hat{Q}^{(n)}})$.
	
	\begin{lem} \label{l:lift}
		Let $M$ be a representation of $Q$ which has a lift $N$ to a universal cover $C$ of $Q$. Choose for any $n \geq 0$ morphism of quivers $\smash{p^{(n)}}$ as in Lemma \ref{l:covermaps}. Then $\smash{p^{(n)}}(N)$ is a lift of $M$ to $\smash{\hat{Q}^{(n)}}$. There exists an $n > 0$ such that the support of $\smash{p^{(n)}}(N)$ is a tree.
	\end{lem}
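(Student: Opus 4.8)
The plan is to treat the two assertions in turn. The first is purely formal: composing the identities of Lemma~\ref{l:covermaps}, one gets $\hat{c}^{(n)}\circ p^{(n)} = c$ for every $n\geq 0$, since $\hat{c}^{(n)} = \hat{c}^{(n-1)}\circ\hat{c}_{n,n-1}$ gives $\hat{c}^{(n)}\circ p^{(n)} = \hat{c}^{(n-1)}\circ p^{(n-1)}$, and induction reduces to $\hat{c}^{(0)}\circ p^{(0)} = c$. Because the induced (direct image) representation functors are compatible with composition — and the one attached to $p^{(n)}$ is well defined since the restriction of $p^{(n)}$ to its range is a cover — this yields $\hat{c}^{(n)}\bigl(p^{(n)}(N)\bigr) = c(N) = M$, i.e.\ $p^{(n)}(N)$ is a lift of $M$ to $\hat{Q}^{(n)}$.

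For the second assertion, the step I would record first is that $p^{(n)}$ carries reduced unoriented paths of $C$ to reduced unoriented paths of $\hat{Q}^{(n)}$, and hence reduced unoriented cycles to reduced unoriented cycles. This follows from $c = \hat{c}^{(n)}\circ p^{(n)}$: a cover preserves reducedness of unoriented paths — by the local bijectivity of a cover on incident arrows, a subword $\alpha\alpha^{-1}$ appearing in $c(q)$ would already come from a subword $\beta\beta^{-1}$ in $q$ — and since no morphism of quivers can turn a non-reduced path into a reduced one, $p^{(n)}$ inherits the property. Next let $S\subseteq C$ be the support of $N$, i.e.\ the finite subquiver on the vertices where $N$ is nonzero and the arrows on which $N$ acts nontrivially. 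As a universal cover is tree-shaped, $C$ has no reduced unoriented cycle, so neither does any subquiver of it; assuming, as one may and as is automatic in the application (where $M$, hence $N$, is exceptional, in particular indecomposable), that $S$ is connected, $S$ is itself a tree. Let $D$ be the largest length of a reduced unoriented path of $C$ joining two vertices of $S$; this is finite because $S$ has finitely many vertices and $C$ is a tree.

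By Lemma~\ref{l:cycles} the sequence $\bigl(l_{\hat{Q}^{(n)}}\bigr)_{n\geq 0}$ is strictly increasing, so I can fix $n>0$ with $l_{\hat{Q}^{(n)}}>D$. For this $n$, $p^{(n)}$ is injective on the vertex set $S_0$: if $p^{(n)}_0(k) = p^{(n)}_0(k')$ for distinct $k,k'\in S_0$, then applying $p^{(n)}$ to the unique reduced unoriented path of $C$ from $k$ to $k'$ — which has length at most $D$ — produces, by the previous paragraph, a reduced unoriented cycle of $\hat{Q}^{(n)}$ of length at most $D<l_{\hat{Q}^{(n)}}$, contradicting the minimality defining $l_{\hat{Q}^{(n)}}$. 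Together with the fact that the restriction of $p^{(n)}$ to its range is a cover — so it is bijective on the arrows incident to any vertex of $C$ — this vertex-injectivity forces $p^{(n)}$ to restrict to an isomorphism of quivers from $S$ onto its image. A direct inspection of the definition of the induced representation identifies that image with $\operatorname{supp}\bigl(p^{(n)}(N)\bigr)$: a vertex or an arrow of $\hat{Q}^{(n)}$ lies in the support of $p^{(n)}(N)$ exactly when it is the image under $p^{(n)}$ of a vertex or arrow of $S$. Hence $\operatorname{supp}\bigl(p^{(n)}(N)\bigr)\cong S$ is a tree.

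The crux of the argument — the only step that is not bookkeeping — is the appeal to Lemma~\ref{l:cycles}: it guarantees that the minimal length of a reduced unoriented cycle in the iterated abelian covers grows without bound, which is precisely what lets the fixed finite subquiver $S$ of the tree $C$ be unfolded isomorphically into $\hat{Q}^{(n)}$ for $n$ large. The remaining points — compatibility of direct image with composition, preservation of reducedness under covering maps, and the identification $\operatorname{supp}(p^{(n)}(N)) = p^{(n)}(S)$ — are routine. The one subtlety worth signalling is that the phrase \emph{the support is a tree} must be read with $N$ indecomposable, so that $S$ is connected rather than a disjoint union of trees; this is exactly the situation in which the lemma is used in the proof of Theorem~\ref{t:main}.
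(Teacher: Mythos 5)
Your proof is correct and takes essentially the same route as the paper: the first assertion is the formal identity $\hat{c}^{(n)}\circ p^{(n)}=c$ combined with functoriality, and the second rests on Lemma~\ref{l:cycles}, choosing $n$ so that $l_{\hat{Q}^{(n)}}$ exceeds a finite bound attached to $N$. The only (harmless) divergence is in the bookkeeping at the end --- the paper takes the bound $\dim_k M$ and notes that a support with at most $\dim_k M$ vertices cannot contain a reduced unoriented cycle of length $>\dim_k M$, whereas you bound by the diameter of $\operatorname{supp}(N)$ in $C$ and show $p^{(n)}$ embeds the support isomorphically --- and your caveat about connectedness (needed to call the support a \emph{tree} rather than a forest) is a fair point that the paper leaves implicit.
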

	
	\begin{proof}
		The assertion that $\smash{p^{(n)}}(N)$ is a lift of $M$ to $\smash{\hat{Q}}^{(n)}$ follows from $c = \hat{c}^{(n)} \circ p^{(n)}$. By Lemma \ref{l:cycles} we find a natural number $n$ such that $\smash{l_{\hat{Q}^{(n)}}} > \dim_k M$. This implies that the support of $\smash{p^{(n)}}(N)$ is a tree.
	\end{proof}

	\section{Torus Actions on Quiver Grassmannians}
	
	We consider two types of torus actions on the quiver Grasmmannian of a rigid representation and reduce the question of rationality to the components of the fixed point locus. We briefly explain the general principle behind this construction. For details we refer the reader to \cite{Bialynicki:73}.
	
	Let $X$ be a smooth projective variety over the algebraically closed field $k$. Let $T = (\mathbb{G}_m)^n$ be an algebraic torus and assume it acts on $X$. The locus of $T$-fixed points $X^T$ is closed. For a one-parameter subgroup $\lambda$ of $T$ consider the induced $\mathbb{G}_m$-action on $X$. The set of fixed points $X^{\lambda(\G_m)}$ of this action contains $X^T$. It is always possible to choose $\lambda$ sufficiently generically such that $X^T = X^{\lambda(\G_m)}$. Let $x \in X(k)$, i.e.\ a closed point of $X$. Consider the morphism $\G_m \to X$ which on closed points is given by $z \mapsto \lambda(z)x$. This morphism can be extended uniquely to a morphism $\A^1 \to X$ as $X$ is separated and complete. We denote its value at $0$ by $\lim_{z \to 0} \lambda(z)x$. This point is automatically a $\lambda(\G_m)$-fixed point. Let $C_\nu$ be a connected component of the fixed point locus. Then by \cite[Thm.\ 4.1]{Bialynicki:73} there is a locally closed subvariety $A_\nu(\lambda)$, called the attractor of $C_\nu$, whose closed points are
	$$
		\{x \in X(k) \mid \lim_{z \to 0} \lambda(z)x \text{ lies in } C_\nu \}.
	$$ 
	Moreover the map $A_\nu(\lambda) \to C_\nu$ is an affine bundle (even a vector bundle according to \cite[II, Thm.\ 4.2]{BCM:02}). As $X$ is irreducible, there is precisely one attractor which is open, say $A_{\nu_0}(\lambda)$. If $C_{\nu_0}$ is rational then so is $A_{\nu_0}(\lambda)$ and this would imply rationality of $X$. Summarizing:
	
	\begin{lem} \label{l:BB}
		Let $X$ be a smooth projective (irreducible) variety which is acted upon by a torus $T$. Let $\lambda$ be a generic one parameter subgroup for this action and let $C_{\nu_0}$ be the unique component of the fixed point locus $X^T$ for which the attractor $A_{\nu_0}(\lambda)$ is open in $X$. If $C_{\nu_0}$ is rational then $X$ is rational, too.
	\end{lem}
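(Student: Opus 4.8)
The plan is to turn the informal discussion preceding the statement into a proof, the only genuine input being the Białynicki--Birula structure theorems already cited.

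First I would make precise the choice of $\lambda$. The fixed locus $X^T$ has finitely many connected components $C_1,\dots,C_m$, and for a one-parameter subgroup $\lambda\colon\G_m\to T$ one has $X^{\lambda(\G_m)}=X^T$ as soon as $\lambda$ avoids a certain finite collection of proper subtori of $T$ (those cut out by the weights occurring in the $T$-representations on the normal spaces to the $C_\nu$). Since $X(T)\otimes\mathbb{Q}$ is not a finite union of proper subspaces, such $\lambda$ exists; fix one and regard $X$ as a $\G_m$-variety through it. After this reduction the components of $X^{\lambda(\G_m)}$ are exactly $C_1,\dots,C_m$.

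Next I would invoke \cite[Thm.\ 4.1]{Bialynicki:73}: $X$ is the disjoint union of the attractors $A_1(\lambda),\dots,A_m(\lambda)$, these are locally closed, and each retraction $A_\nu(\lambda)\to C_\nu$ is an affine bundle, in fact a vector bundle by \cite[II, Thm.\ 4.2]{BCM:02}. Because the $A_\nu(\lambda)$ form a finite stratification of the irreducible variety $X$ into locally closed subsets, exactly one stratum has dimension $\dim X$; a locally closed subset of an irreducible variety that has full dimension is open and dense. This singles out the index $\nu_0$ of the statement and shows the corresponding attractor is the unique open one.

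Finally, assuming $C_{\nu_0}$ is rational, I would note that the total space of a rank-$r$ vector bundle over $C_{\nu_0}$ is Zariski-locally a product of an open subset of $C_{\nu_0}$ with $\A^r$, hence birational to $C_{\nu_0}\times\A^r$ and therefore rational; thus $A_{\nu_0}(\lambda)$ is rational. Since $A_{\nu_0}(\lambda)$ is open and dense in $X$, it is birational to $X$, so $X$ is rational. I expect the only delicate point to be the genericity of $\lambda$, which is what guarantees that passing from $T$ to $\lambda(\G_m)$ does not create spurious fixed-point components and hence does not disturb the count of open attractors; the rest is a direct application of the quoted results and the fact that a vector bundle over a rational base is rational.
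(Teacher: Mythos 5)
Your argument is correct and follows essentially the same route as the paper, which likewise establishes the lemma via the discussion preceding its statement: choose a generic $\lambda$ with $X^{\lambda(\G_m)}=X^T$, invoke the Bia{\l}ynicki-Birula decomposition into attractors that are vector bundles over the fixed-point components, observe that irreducibility forces exactly one attractor to be open and dense, and conclude by birational invariance of rationality. The extra details you supply (existence of a generic $\lambda$, why exactly one stratum is open) are correct fillings-in of points the paper leaves implicit.
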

	
	The first torus action that we are going to use reduces the case of a rigid representation to the case of an exceptional representation. It has already been introduced in the proof of \cite[Prop.\ 3.2]{DWZ:10}. Let $M = M^{(1)} \oplus \ldots \oplus M^{(n)}$ be the decomposition of a rigid representation $M$ into indecomposable direct summands. Then every $M^{(r)}$ is exceptional. Let $d^{(r)} = \dimvect M^{(r)}$. Consider the action of $T = (\G_m)^n$ on $\Gr_e(M)$ as follows: for $t = (t_1,\ldots,t_n)$ and $U \in \Gr_e(M)$ let $u = \sum_r u_r$ be the decomposition of $u \in U$ into summands $u_r \in \smash{M^{(r)}}$. Define $t*u = \sum_r t_ru_r$ and $t*U = \{t*u \mid u \in U\}$. The subspace $t*U$ is in fact a subrepresentation of $M$ of dimension vector $e$, hence it belongs to $\Gr_e(M)$. A subrepresentation $U$ of $M$ is a fixed point of this action if and only if $U = \bigoplus_r U \cap M^{(r)}$. We conclude that
	$$
		\Gr_e(M)^T = \bigsqcup_{e^{(r)} \leq d^{(r)},\ \sum_r e^{(r)} = e} \Gr_{e^{(1)}}(M^{(1)}) \times \ldots \times \Gr_{e^{(n)}}(M^{(n)}).
	$$
	An application of Lemma \ref{l:BB} now shows
	
	\begin{lem} \label{l:red1} 
		Let $M$ be a rigid representation of $Q$ and let $M = M^{(1)} \oplus \ldots \oplus M^{(n)}$ be its decomposition into indecomposable direct summands. If every quiver Grassmannian associated with each $M^{(r)}$ is rational, then $\Gr_e(M)$ is rational. 
	\end{lem}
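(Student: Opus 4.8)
The plan is to apply Lemma \ref{l:BB} to $X = \Gr_e(M)$ endowed with the $T = (\G_m)^n$-action introduced above. Since $M$ is rigid, $X$ is a smooth, projective, irreducible variety, and we may assume it nonempty, as otherwise there is nothing to prove; thus the hypotheses of Lemma \ref{l:BB} are satisfied. Fixing a generic one-parameter subgroup $\lambda$ of $T$, let $C_{\nu_0}$ be the unique connected component of $X^T$ whose attractor $A_{\nu_0}(\lambda)$ is open in $X$. By Lemma \ref{l:BB}, it then suffices to prove that $C_{\nu_0}$ is rational.

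For this I would invoke the description of the fixed locus established above,
$$
	\Gr_e(M)^T = \bigsqcup_{e^{(r)} \leq d^{(r)},\ \sum_r e^{(r)} = e} \Gr_{e^{(1)}}(M^{(1)}) \times \ldots \times \Gr_{e^{(n)}}(M^{(n)}),
$$
according to which $C_{\nu_0}$ is a product $\Gr_{e^{(1)}}(M^{(1)}) \times \ldots \times \Gr_{e^{(n)}}(M^{(n)})$ for suitable sub-dimension vectors $e^{(r)} \leq d^{(r)}$ with $\sum_r e^{(r)} = e$; moreover, being a nonempty connected component, $C_{\nu_0}$ is one of the products in which every factor is nonempty. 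Each summand $M^{(r)}$ is exceptional: it is indecomposable by construction and rigid, since $\Ext^1(M^{(r)},M^{(r)})$ is a direct summand of $\Ext^1(M,M) = 0$. Hence the hypothesis of the present lemma tells us that each factor $\Gr_{e^{(r)}}(M^{(r)})$ is rational.

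It then remains only to recall that a finite product of rational $k$-varieties is rational --- a rational variety is birational to an affine space, and a product of affine spaces is an affine space --- so that $C_{\nu_0}$, being such a product, is rational. Lemma \ref{l:BB} then yields rationality of $\Gr_e(M)$, which is what we wanted. I do not expect any genuine obstacle in this argument: the Bialynicki-Birula input (Lemma \ref{l:BB}) and the computation of the fixed locus have both already been carried out above, and the only points requiring a remark are the stability of rationality under finite products and the observation that the open-attractor component $C_{\nu_0}$ necessarily occurs among the nonempty summands of $X^T$.
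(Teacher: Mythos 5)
Your proposal is correct and follows exactly the paper's argument: the paper sets up the same $T=(\G_m)^n$-action, computes $\Gr_e(M)^T$ as the disjoint union of products $\Gr_{e^{(1)}}(M^{(1)})\times\ldots\times\Gr_{e^{(n)}}(M^{(n)})$, and concludes by applying Lemma \ref{l:BB}. The only difference is that you spell out the (easy) remaining details --- that the open-attractor component is one of these products and that a finite product of rational varieties is rational --- which the paper leaves implicit.
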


	The second torus action reduces the case of an exceptional representation of an arbitrary quiver to the case of an exceptional representation of a tree-shaped quiver. A variant of this action was employed in \cite[4.2]{RW:18}.
	Let $M$ be an exceptional representation of $Q$. By Lemma \ref{l:ringel} $M$ lifts to a universal cover. So it also possesses a lift to the universal abelian covering quiver. Fix such a lift $\smash{\hat{M}}$, i.e.\ fix a decomposition $M_i = \bigoplus_\chi \smash{\hat{M}_{i,\chi}}$ such that $M_\alpha(\smash{\hat{M}_{s(\alpha)}}) \sub \smash{\hat{M}_{t(\alpha),\chi+\alpha}}$.
	Let $T$ be the algebraic torus $\smash{(\mathbb{G}_m)^{Q_1}}$. Its group of characters is precisely $\smash{\Z^{Q_1}}$. Let $G = \prod_{i \in Q_0} \GL(M_i)$. Let $\psi: T \to G$ be the group homomorphism that corresponds to the chosen decomposition. More precisely this means that $\psi$ is given by group homomorphisms $\psi_i: T \to \GL(M_i)$ which are defined as
	$$
		\psi_i(t): M_i \to M_i, v_i \mapsto \sum_\chi \chi(t)v_{i,\chi}
	$$ 
	where $v_i = \sum_\chi v_{i,\chi}$ is the decomposition of a vector with respect to the direct sum decomposition. We define a $T$-action on $\Gr_e(M)$ via $t * U = (\psi_i(t) U_i)_i$. To show that this is well-defined we need to make sure that $t * U$ is again a subrepresentation of $M$. Observe that for every arrow $\alpha: i \to j$ 
	$$
		\psi_j(t)M_\alpha \psi_i(t)^{-1} = t_\alpha M_\alpha.
	$$
	Hence we see that
	$$
		M_\alpha(\psi_i(t)U_i) = t_\alpha^{-1}\psi_j(t)M_\alpha\psi_i(t)(\psi_i(t)U_i) = t_\alpha^{-1}\psi_j(t)M_\alpha(U_i) \sub t_a^{-1}\psi_j(t)U_j = \psi_j(t)U_j
	$$
	which proves that, indeed, $t*U$ is a subrepresentation of $M$. A subrepresentation $U$ of $M$ is a fixed point of this torus action on $\Gr_e(M)$ if and only if
	$$
		U_i = \bigoplus_\chi U_i \cap \hat{M}_{i,\chi}.
	$$
	Let $\hat{d}$ be the dimension vector of $\smash{\hat{M}}$. The above argument shows that the fixed point locus $\Gr_e(M)^T$ decomposes as the disjoint union
	$$
		\bigsqcup_{\hat{e} \leq \hat{d},\ c(\hat{e}) = e} \Gr_{\hat{e}}(\hat{M})
	$$
	the components of which are quiver Grassmannians of $\smash{\hat{M}}$ which is a representation of $\hat{Q}$. 
	So, applying again Lemma \ref{l:BB}, we get
	
	\begin{lem} \label{l:red2}
		Let $M$ be an exceptional representation of $Q$ and let $\smash{\hat{M}}$ be a lift of $M$ to the abelian covering quiver. If every quiver Grassmannian associated with $\smash{\hat{M}}$ is rational, then so is $\Gr_e(M)$.
	\end{lem}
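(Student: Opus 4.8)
The plan is to apply the Bialynicki--Birula--type criterion of Lemma~\ref{l:BB} to $X = \Gr_e(M)$ equipped with the torus action of $T = (\mathbb{G}_m)^{Q_1}$ constructed above. First I would check that this lemma is applicable: since $M$ is exceptional it is in particular rigid, so $\Gr_e(M)$ is a smooth, irreducible, projective variety (by the lemma on rigid representations in Section~2), which is exactly the hypothesis required by Lemma~\ref{l:BB}. Then fix a one-parameter subgroup $\lambda$ of $T$ that is generic enough that $\Gr_e(M)^{\lambda(\mathbb{G}_m)} = \Gr_e(M)^T$; such a $\lambda$ exists by the genericity statement recalled just before Lemma~\ref{l:BB}.

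Next I would feed in the description of the fixed locus obtained above: $\Gr_e(M)^T$ is the disjoint union of the quiver Grassmannians $\Gr_{\hat{e}}(\hat{M})$ over sub--dimension vectors $\hat{e} \leq \hat{d}$ with $\hat{c}(\hat{e}) = e$, and this is a \emph{finite} disjoint union because $\hat{d}$ has finite support. Hence every connected component $C_\nu$ of $\Gr_e(M)^T$ is a union of connected components of such quiver Grassmannians. By hypothesis each $\Gr_{\hat{e}}(\hat{M})$ is rational; in particular the distinguished component $C_{\nu_0}$ whose attractor $A_{\nu_0}(\lambda)$ is open in $\Gr_e(M)$ is rational. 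Lemma~\ref{l:BB} then yields that $\Gr_e(M)$ is rational, which is the claim.

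A couple of minor points are worth recording. The components $\Gr_{\hat{e}}(\hat{M})$ are themselves smooth projective, in fact irreducible, varieties, because $\hat{M}$ inherits exceptionality from $M$ by Lemma~\ref{l:rigidCover}; this makes the word ``rational'' unambiguous for each of them and shows they are genuine connected components, although it is not strictly needed, since Lemma~\ref{l:BB} only requires rationality of the single component $C_{\nu_0}$, which the hypothesis supplies directly. I expect the only real content of the argument to lie in what has already been established before the statement --- the well-definedness of the $T$-action (via $\psi_j(t) M_\alpha \psi_i(t)^{-1} = t_\alpha M_\alpha$) and the identification of $\Gr_e(M)^T$ with the family of $\Gr_{\hat{e}}(\hat{M})$ --- so that the lemma itself is a formal consequence of Lemma~\ref{l:BB}; the one step demanding a little care is the generic choice of $\lambda$.
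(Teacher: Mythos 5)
Your proposal is correct and follows exactly the paper's argument: the lemma is deduced by applying Lemma~\ref{l:BB} to the $T = (\mathbb{G}_m)^{Q_1}$-action constructed in the preceding paragraph, using the identification of $\Gr_e(M)^T$ with the disjoint union of the $\Gr_{\hat{e}}(\hat{M})$ and the smoothness and irreducibility of $\Gr_e(M)$ coming from rigidity. Your additional remarks on the genericity of $\lambda$ and on the components being genuine connected components via Lemma~\ref{l:rigidCover} are consistent with, and slightly more explicit than, the paper's treatment.
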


	\section{Rigid Quiver Grassmannians of Tree-Shaped Quivers}

	In this section let $Q$ be an orientation of a tree. Let $M$ be a rigid representation of $Q$. Let $l$ be a leaf of $Q$, i.e.\ a vertex with just one arrow adjacent to it. Without loss of generality we may assume $l$ to be a sink; if $l$ is a source we may argue in a similar way or apply the reflection functor at $l$ which defines a birational equivalence $\Gr_e(M) \dashrightarrow \Gr_{\sigma_l(e)}(S_l^+(M))$ (see \cite[Thm.\ 5.16]{Wolf:09}). 
	
	Let $\beta: k \to l$ be the unique arrow ending in $l$. 
	Let $Q'$ be the full subquiver of $Q$ on $Q'_0 = Q_0 - \{l\}$. For a representation $N$ of $Q$, define $N'$ as the representation of $Q'$ which arises by forgetting about $N_l$ and $N_\beta$. For a dimension vector $d$ of $Q$ let $d' = (d_i)_{i \neq l}$.
	
	\begin{lem} \label{l:rigid}
		Let $M$ be a rigid representation of $Q$. Then $M'$ is a rigid representation of $Q'$.
	\end{lem}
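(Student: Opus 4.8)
The plan is to prove that restricting self-extensions along the embedding of $Q'$ into $Q$ yields a surjection $\Ext^1_Q(M,M) \to \Ext^1_{Q'}(M',M')$. Granting this, rigidity of $M$ means the source is zero, hence the target is zero, which says exactly that $M'$ is rigid. Throughout I use, as arranged before the statement, that the leaf $l$ is a sink, so that $\beta\colon k \to l$ is the only arrow of $Q$ incident with $l$ and no arrow other than $\beta$ has $l$ as an endpoint.

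The main step is a lifting construction for short exact sequences. Starting from an arbitrary short exact sequence $0 \to M' \to E' \to M' \to 0$ in $\operatorname{Rep}(Q')$, I would fix a vector-space splitting $E'_k \cong M_k \oplus M_k$ compatible with it and build a representation $E$ of $Q$ by setting $E_i = E'_i$ and $E_\alpha = E'_\alpha$ for all vertices $i \neq l$ and arrows $\alpha \neq \beta$, $E_l = M_l \oplus M_l$, and $E_\beta = M_\beta \oplus M_\beta$ under the chosen identification. Together with the original inclusion and projection at the vertices of $Q'$ and the obvious inclusion $v \mapsto (v,0)$ and projection $(v,w)\mapsto w$ at $l$, this should give a short exact sequence $0 \to M \to E \to M \to 0$ in $\operatorname{Rep}(Q)$ whose restriction to $Q'$ is the sequence we began with. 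If the new sequence split over $Q$, restricting a splitting to $Q'$ would split the old one; so a non-split self-extension of $M'$ always lifts to a non-split self-extension of $M$, and since $M$ has none we conclude $\Ext^1_{Q'}(M',M') = 0$.

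I do not expect a genuine obstacle here; the one point that needs care is checking that the maps $M \hookrightarrow E$ and $E \twoheadrightarrow M$ defined above commute with $E_\beta$, i.e.\ are really morphisms in $\operatorname{Rep}(Q)$, and this is precisely where the hypothesis that $l$ is a leaf enters, because $\beta$ being the \emph{only} arrow at $l$ means no further compatibility is imposed. For completeness I would remark that the same conclusion can be obtained homologically: in the two-term complex computing $\Hom_Q(M,M)$ and $\Ext^1_Q(M,M)$ recalled in the proof of Lemma \ref{l:rigidCover}, deleting the summands indexed by the vertex $l$ and the arrow $\beta$ is a surjective morphism of complexes onto the analogous complex for $M'$ over $Q'$, its kernel is concentrated in degrees $0$ and $1$, and the long exact cohomology sequence then forces $\Ext^1_Q(M,M) \to \Ext^1_{Q'}(M',M')$ to be onto.
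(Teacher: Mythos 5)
Your argument is correct, but it takes a genuinely different route from the paper. You prove that the restriction map $\Ext_Q(M,M) \to \Ext_{Q'}(M',M')$ is surjective -- either by explicitly lifting a Yoneda extension $0 \to M' \to E' \to M' \to 0$ to $Q$ (setting $E_l = M_l \oplus M_l$ and $E_\beta = M_\beta \oplus M_\beta$ after splitting $E'_k$ as a vector space; the verification that the inclusion and projection commute with $E_\beta$ goes through exactly as you indicate, and the leaf hypothesis is indeed what guarantees no further compatibilities), or more slickly via the surjection of two-term complexes whose kernel sits in degrees $0$ and $1$. The paper instead factors through the representation $M''$ of $Q$ obtained from $M'$ by extension by zero at $l$: it uses the short exact sequence $0 \to S(l)\otimes M_l \to M \to M'' \to 0$, identifies $\Ext_{Q'}(M',M')$ with $\Ext_Q(M'',M'')$, and then runs two long exact sequences (first killing $\Ext_Q(M,M'')$ from rigidity of $M$, then killing $\Ext_Q(M'',M'')$ using $\Hom(S(l)\otimes M_l,M'')=0$). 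Both proofs are short; yours establishes the slightly stronger structural statement that restriction is onto on self-extensions, while the paper's stays entirely inside $\operatorname{Rep}(Q)$ and avoids choosing splittings or comparing complexes over two different quivers. Your second, homological variant is arguably the cleanest of the three and is fully rigorous as sketched, since for $\alpha \neq \beta$ neither endpoint of $\alpha$ is $l$, so deleting the summands indexed by $l$ and $\beta$ really is a chain map.
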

	
	\begin{proof}
		Consider the short exact sequence $0 \to S(l)\otimes M_l \to M \to M'' \to 0$ of representations of $Q$. It is obvious that $\Ext_{Q'}(M',M') = \Ext_Q(M'',M'')$. It hence suffices to show that $M''$ is a rigid representation of $Q$. An application of $\Hom_Q(M,\blank)$ (we are going to drop the subscript $Q$ in the following) to the short exact sequence above yields a surjection
		$$
			\ldots \to \Ext(M,M) \to \Ext(M,M'') \to 0
		$$
		and hence $\Ext(M,M'') = 0$ for $M$ is rigid. Applying $\Hom(\blank,M'')$ to the same short exact sequence renders
		$$
			\ldots \to \Hom(S(l) \otimes M_l,M'') \to \Ext(M'',M'') \to \Ext(M,M'') \to \ldots.
		$$
		The group $\Ext(M,M'')$ vanishes as we have just pointed out and $\Hom(S(l) \otimes M_l,M'')$ vanishes too, as $M''_l = 0$. This shows that $\Ext(M'',M'') = 0$.
	\end{proof}
	
	We consider the morphism $p_l: \Gr_e(M) \to \Gr_{e'}(M')$ of varieties which sends $U$ to $p_l(U) = U'$. For a subrepresentation $U'$ of $M'$ the set of closed points of the fiber $p_l^{-1}(U')$ is the set
	$$
		\{U_l \in \Gr_{e_l}(M_l) \mid M_\beta(U'_k) \sub U_l \}.
	$$
	Consider the map of vector bundles on $\Gr_{e'}(M')$
	$$
		\phi_\beta: \UU'_k \to (M'_k)_{\Gr_{e'}(M')} = (M_k)_{\Gr_{e'}(M')} \xto{}{M_\beta} (M_l)_{\Gr_{e'}(M')}
	$$
	Let $X \sub \Gr_{e'}(M')$ be the dense open subset over which $\phi_\beta$ has maximal rank, say $r$. Then the image $\im \phi_\beta$ is a vector bundle of rank $r$ on $X$ (note that $\Gr_{e'}(M')$ is smooth and hence reduced) and the restriction $p_l^{-1}(X) \to X$ of $p_l$ identifies the the total space $p_l^{-1}(X)$ with the subbundle Grassmannian $\Gr_{e_l-r}((M_l)_X/\im \phi_\beta)$ as a scheme over $X$.
	
	\begin{lem} \label{l:vb}
		Let $X$ be rational variety, let $E$ be a vector bundle on $X$ of rank $n$ and let $r \leq n$. Then $\Gr_r(E)$ is a rational variety.
	\end{lem}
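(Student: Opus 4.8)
The plan is to realize $\Gr_r(E)$ as a Zariski-locally trivial fibre bundle over $X$ whose fibre is the ordinary Grassmannian $\Gr_r(k^n)$, and then to transport rationality from the base and the fibre to the total space. The point is that a vector bundle, being a locally free sheaf of rank $n$, is trivial on the members of a suitable open cover $\{V_\mu\}$ of $X$; over such a $V_\mu$ the functorial description of the relative Grassmannian together with a chosen trivialization $E|_{V_\mu} \cong \mathcal O_{V_\mu}^{\,n}$ gives a canonical isomorphism $\Gr_r(E)|_{V_\mu} \cong V_\mu \times \Gr_r(k^n)$.

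First I would record that $\Gr_r(E)$ is irreducible. Indeed $X$ is irreducible (it is a variety), so the open sets $V_\mu$ have pairwise non-empty intersections, and hence the charts $V_\mu \times \Gr_r(k^n)$ — each irreducible, since the ordinary Grassmannian is irreducible — overlap pairwise; gluing them shows $\Gr_r(E)$ is irreducible, so it is a variety. Next, choose a non-empty open subset $U \sub X$ on which $E$ is trivial. Then $\Gr_r(E)|_U \cong U \times \Gr_r(k^n)$ is a non-empty, hence dense, open subvariety of $\Gr_r(E)$. The set $U$, being a non-empty open subset of the rational variety $X$, is itself rational, so it is birational to $\A^{\dim X}$. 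The ordinary Grassmannian $\Gr_r(k^n)$ is rational: its big Schubert cell is an open subset isomorphic to the affine space $\A^{r(n-r)}$. Therefore $U \times \Gr_r(k^n)$ is birational to $\A^{\dim X} \times \A^{r(n-r)} = \A^{\dim X + r(n-r)}$, and being birational to an open dense subvariety of $\Gr_r(E)$, it witnesses that $\Gr_r(E)$ is rational.

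I do not expect a serious obstacle here; the only steps that require a little care are the identification $\Gr_r(E)|_{V_\mu} \cong V_\mu \times \Gr_r(k^n)$ (which is immediate from the chosen trivialization and the construction of the relative Grassmannian) and the verification that $\Gr_r(E)$ is irreducible, for which one uses essentially that $X$ is irreducible. Everything else is the elementary fact that a product of rational varieties is rational, applied to $U$ and $\Gr_r(k^n)$.
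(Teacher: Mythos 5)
Your proof is correct and follows essentially the same route as the paper: restrict to a trivializing open subset $U$, identify the preimage with $U \times \Gr_r(k^n)$, and use that a product of rational varieties is rational. The only addition is your explicit check that $\Gr_r(E)$ is irreducible, which the paper leaves implicit.
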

	
	\begin{proof}
		Let $U$ be a non-empty open subset of $X$ over which $E$ trivializes, say $E|_U \cong U \times k^n$. Let $p: \Gr_r(E) \to X$ be the structure map. Then $p^{-1}(U) \cong U \times \Gr_r^n$. As both $U$ and $\Gr_r^n$ are rational, then so is $p^{-1}(U)$ and thus also $\Gr_r(E)$.
	\end{proof}
	
	The above lemma shows that, in order to show that $\Gr_e(M)$ is rational, it suffices to prove that $X$ is rational, or equivalently that $\Gr_{e'}(M')$ is rational.	
	
	\begin{lem} \label{l:treeRational}
		Let $Q$ be tree-shaped and let $M$ be a rigid representation of $Q$. Then $\Gr_e(M)$ is a rational variety.
	\end{lem}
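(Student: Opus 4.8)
The plan is to prove the lemma by induction on the number $n$ of vertices of $Q$. Since a tree-shaped quiver is connected, $n \ge 1$; if $n = 1$ the quiver has no arrows --- a loop would be a reduced unoriented cycle --- so $M$ is a single vector space and $\Gr_e(M)$ is an ordinary Grassmannian, which is rational (as already used in the proof of Lemma \ref{l:vb}). So assume $n \ge 2$ and that the lemma holds for every tree-shaped quiver with fewer than $n$ vertices.

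Since $Q$ is a finite tree with at least two vertices it has a leaf; I would fix one, $l$. As explained in the discussion preceding the lemma we may assume $l$ is a sink (if it is a source one argues symmetrically, or first applies the reflection functor at $l$, which produces a rigid representation of another orientation of the same tree whose quiver Grassmannian is birationally equivalent to $\Gr_e(M)$). Let $\beta\colon k \to l$ be the arrow at $l$ and form $Q'$, $M'$, $e'$ as in that discussion. Then $Q'$ has $n-1$ vertices; it has no reduced unoriented cycles because $Q$ has none, and it is connected, since the only arrow incident to $l$ is $\beta$, so a reduced unoriented path between two vertices different from $l$ cannot pass through $l$ and hence lies in $Q'$. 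Thus $Q'$ is again tree-shaped. By Lemma \ref{l:rigid}, $M'$ is a rigid representation of $Q'$, and $e' \le \dimvect M'$, so the induction hypothesis applies: $\Gr_{e'}(M')$ is rational.

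To conclude I would invoke the discussion preceding the lemma directly: over the maximal-rank locus $X$ of $\phi_\beta$ the map $p_l$ identifies the open subvariety $p_l^{-1}(X)$ of $\Gr_e(M)$ with the Grassmann bundle $\Gr_{e_l - r}\bigl((M_l)_X/\im\phi_\beta\bigr)$ over $X$, and $X$ is open and dense in $\Gr_{e'}(M')$. Since $\Gr_{e'}(M')$ is rational, so is $X$, hence so is this Grassmann bundle by Lemma \ref{l:vb}, and therefore $\Gr_e(M)$ is rational; this completes the induction. All the substantive ingredients --- Lemma \ref{l:rigid}, the bundle description of $p_l$ over $X$, and Lemma \ref{l:vb} --- are already in hand, so the only points requiring attention are the elementary combinatorial ones: that deleting a leaf from a tree again yields a tree, and that the source-leaf case reduces to the sink-leaf case. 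Those are where I would be most careful, though I do not expect real difficulty there.
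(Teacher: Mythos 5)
Your proposal is correct and follows essentially the same route as the paper: induction on the number of vertices, with the single-vertex ordinary Grassmannian as base case, and the inductive step given by combining Lemma \ref{l:rigid} with the Grassmann-bundle description of $p_l$ over the maximal-rank locus $X$ and Lemma \ref{l:vb}. The extra details you supply (that deleting a leaf keeps the quiver tree-shaped, and the treatment of a source leaf) are points the paper leaves implicit but handles identically.
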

	
	\begin{proof}
		We choose a leaf of $Q$, say $l$, of which we may without loss of generality assume that it is a sink. Over a suitable non-empty open subset $X \sub \Gr_{e'}(M')$ the map $p_l: \Gr_e(M) \to \Gr_{e'}(M')$ is a Grassmann bundle. Hence by Lemma \ref{l:vb}  $\Gr_e(M)$ is rational provided that $\Gr_{e'}(M')$ is. But $M'$ is also rigid by Lemma \ref{l:rigid} hence we may assume by induction on the number of vertices of the quiver that $\Gr_{e'}(M')$ is a rational variety. The base of this induction is the case of a single vertex and no arrows; in this case the quiver Grassmannian is an ordinary Grassmannian which is rational.
	\end{proof}

	\section{Proof of the Main Result}
	
	We are ready to prove Theorem \ref{t:main} which states that the quiver Grassmannian $\Gr_e(M)$ associated with a rigid representation $M$ is a rational variety.
		
	\begin{proof}[Proof of Theorem \ref{t:main}]
		Let $C$ be a universal cover of $Q$. Using Lemma \ref{l:red1}, we may assume without loss of generality that $M$ is exceptional. Being exceptional, $M$ possesses a lift $N$ to $C$ by Lemma \ref{l:ringel}. Choose cover morphisms $\smash{p^{(n)}}$ as in Lemma \ref{l:covermaps} and a natural number $n$ large enough such that $\smash{p^{(n)}(N)}$ is supported on a tree, which is possible according to Lemma \ref{l:lift}. Inductively applying Lemma \ref{l:red2}, we see that $\Gr_e(M)$ is rational provided that every quiver Grassmannian $\Gr_f(\smash{p^{(n)}(N)})$ is. But as $\smash{p^{(n)}(N)}$ is exceptional by virtue of Lemma \ref{l:rigidCover} and supported on a tree, rationality of the quiver Grasmmannian $\Gr_f(\smash{p^{(n)}(N)})$ follows from Lemma \ref{l:treeRational}. This concludes the proof.
	\end{proof}

	\bibliographystyle{abbrv}
	\bibliography{Literature}

\end{document}